\def\cs{\mathop{\cs }}
\def\calf{\mathcal{F}}
\def\calc{\mathcal{C}}
\def\call{\mathcal{L}}
\def\calb{\mathcal {B}}
\def\cfk{{\textrm{CFK}}}
\newcommand{\spinc}{\ifmmode{{\mathfrak s}}\else{${\mathfrak s}$\ }\fi}
\newcommand{\spinct}{\ifmmode{{\mathfrak t}}\else{${\mathfrak t}$\ }\fi}
\newcommand{\spincw}{\ifmmode{{\mathfrak w}}\else{${\mathfrak w}$\ }\fi}
\def\Z{\mathbb Z}
\def\R{\mathbb R}
\def\F{\mathbb F}
\def\alg{{\rm alg}}
\def\alex{{\rm Alex}}
\def\cfki{\cfk ^\infty(K)}
\def\cs{\, \#  \,}
\theoremstyle{plain}
\newtheorem{theorem}{Theorem}[section]
\newtheorem*{theorem*}{Theorem}
\newtheorem{corollary}[theorem]{Corollary}
\newtheorem{proposition}[theorem]{Proposition}
\newtheorem{conjecture}[theorem]{Conjecture}
\theoremstyle{definition}
\newtheorem{definition}[theorem]{Definition}
\theoremstyle{remark}
\numberwithin{equation}{section}
\def\U{\Upsilon}
\begin{document}


\title[Stable Equivalence of Connected Sums of Torus Knots]{Using Secondary Upsilon Invariants to Rule out stable equivalence of knot complexes}
\author{Samantha Allen}
\address{Samantha Allen: Department of Mathematics, Indiana University, Bloomington, IN 47405 }
\email{allensam@indiana.edu}

\begin{abstract}  Two Heegaard Floer knot complexes are called stably equivalent if an acyclic complex can be added to each complex to make them filtered chain homotopy equivalent.  Hom showed that if two knots are concordant, then their knot complexes are stably equivalent.  Invariants of stable equivalence include the concordance invariants $\tau$, $\varepsilon$, and $\Upsilon$.  Feller and Krcatovich gave a relationship between the Upsilon invariants of torus knots.  We use secondary Upsilon invariants  defined by Kim and Livingston to show that these relations do not extend to stable equivalence.
\end{abstract}

\maketitle



\section{Introduction}
In general, the study of torus knots and their concordance invariants has been a frequent topic of investigation.  One early highlight was Litherland's proof of the independence of torus knots using Tristram-Levine signature functions in \cite{Litherland}.  Because of their role in studying algebraic curves, research on invariants of torus knots continues.  In particular, the Ozsv\'ath--Stipsicz--Szab\'o Upsilon function has been used in \cite{borodzik-hedden} and \cite{feller}.  Recently, Feller and Krcatovich (in \cite{feller-krcatovich}) determined relationships among the Upsilon functions of torus knots.  Our goal here is to use the secondary Upsilon invariants, defined by Kim and Livingston in \cite{kim-livingston}, to show that these relationships do not extend to stabilized knot complexes of torus knots.  

Two Heegaard Floer knot complexes are called stably equivalent if an acyclic complex can be added to each complex to make them filtered chain homotopy equivalent.  In \cite{hom2}, Hom showed that if two knots are concordant, then their knot complexes are stably equivalent.  The concordance invariants $\tau$, $\varepsilon$, $\Upsilon$, $\U^2$ are all invariants of the stable equivalence class (see \cite{hom1, hom2, oss, os2, kim-livingston}).  We give an example of a pair of torus knots which have identical Upsilon invariants (by Feller and Krcatovich \cite{feller-krcatovich}) but differing secondary Upsilon invariants, and thus have knot complexes which are not stably equivalent.


\begin{theorem}\label{main thm}
The knot complex $\cfk ^{\infty}(T(5,7))$ is not stably equivalent to the knot complex $\cfk ^{\infty}(T(2,5)\cs T(5,6))$.
\end{theorem}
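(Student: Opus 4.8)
The plan is to use the fact, recorded above, that the secondary Upsilon invariants $\U^{2}$ of Kim--Livingston are invariants of the stable equivalence class. The additivity of Upsilon under connected sum gives $\U_{T(2,5)\cs T(5,6)} = \U_{T(2,5)} + \U_{T(5,6)}$, and by Feller--Krcatovich this equals $\U_{T(5,7)}$; one also checks that the primary invariants $\tau$ and $\varepsilon$ agree on the two knots. So none of the classical invariants separate the complexes, and it suffices to exhibit a single value of $t$ (together with the relevant auxiliary parameter) at which $\U^{2}_{T(5,7)}$ and $\U^{2}_{T(2,5)\cs T(5,6)}$ disagree.

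First I would write down explicit filtered complexes. Since torus knots are L-space knots, each $\cfk^{\infty}(T(p,q))$ is a staircase complex, with step lengths read off from the numerical semigroup $\langle p,q\rangle$ (equivalently, from the symmetrized Alexander polynomial). This gives $\cfk^{\infty}(T(2,5))$ as a short staircase and $\cfk^{\infty}(T(5,6))$, $\cfk^{\infty}(T(5,7))$ as longer ones, whose corner coordinates I would tabulate. Then $\cfk^{\infty}(T(2,5)\cs T(5,6))$ is filtered chain homotopy equivalent to the tensor product $\cfk^{\infty}(T(2,5)) \otimes_{\F[U,U^{-1}]} \cfk^{\infty}(T(5,6))$; I would put this in reduced form and discard the acyclic box summands, which affect neither $\U$ nor $\U^{2}$, keeping only the essential non-acyclic summand.

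Next I would evaluate the secondary invariant on both sides. The Kim--Livingston invariant $\U^{2}_{K,t}$ is typically determined by $\U_K$ at values of $t$ where $\U_K$ is differentiable and the homology generator of the $t$-filtered complex is represented by an essentially unique cycle; it carries genuinely new information at the breakpoints of $\U_K$, which for torus knots sit at rational values of $t$ governed by the semigroup. I would therefore fix such a breakpoint $t_0$ --- common to both sides, since the Upsilon functions coincide --- compute $\U^{2}_{T(5,7),t_0}$ directly from the staircase, and compute $\U^{2}_{T(2,5)\cs T(5,6),t_0}$ from the reduced tensor product, in each case tracking the filtration levels of the generators that realise the relevant minimum in the definition of $\U^{2}$. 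Exhibiting a value of $t_0$ (and of the auxiliary parameter) for which these two quantities differ then proves the theorem.

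The main obstacle is the computation with the tensor product complex $\cfk^{\infty}(T(2,5)) \otimes \cfk^{\infty}(T(5,6))$: it is not a staircase, and reducing it to its essential summand while keeping exact track of the bifiltration --- and hence of the $t_0$-filtration levels that enter $\U^{2}$ --- is the heart of the argument. Choosing a $t_0$ at which the two secondary invariants genuinely differ, rather than one where the contributions happen to coincide, also takes some care; but once the reduced complexes and a good $t_0$ are identified, evaluating $\U^{2}$ on each side is a finite, if lengthy, combinatorial calculation.
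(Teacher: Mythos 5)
Your proposal follows essentially the same route as the paper: both rely on $\U^2$ being a stable equivalence invariant, build the staircases for $T(5,7)$, $T(2,5)$, $T(5,6)$ from their semigroups, form the tensor product for the connected sum, and compare the secondary invariants at a common breakpoint of $\U'$. Be aware, though, that the entire content of the proof lies in the calculation you defer to the end: the paper takes $t_0=\frac{4}{5}$, identifies the two pivot generators at filtration levels $(1,8)$ and $(3,5)$ on each side, computes $\gamma^2_{T(5,7),\frac45}(\frac45)=\frac{23}{5}$ from the staircase, and then shows directly (by listing the four Maslov-grading-$1$ generators of the tensor product lying in $\calf_{\frac45,\frac{23}{5}}$ and checking that no combination of their boundaries equals the sum of the two pivots) that the corresponding generators in $\cfk^{\infty}(T(2,5)\cs T(5,6))$ are \emph{not} homologous at that level --- notably without ever reducing the tensor product or discarding acyclic summands, which sidesteps the bookkeeping you flag as the main obstacle.
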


 \noindent Similar procedures show that $\cfk ^{\infty}(T(7,9))$ is not stably equivalent to $\cfk ^{\infty}(T(2,7)\cs T(7,8))$, and, in fact, the following general theorem holds.
\begin{theorem}\label{general thm}
For all $p\geq 5$ odd, the knot complex $\cfk ^{\infty}(T(p, \, p+2))$ is not stably equivalent to $\cfk ^{\infty}(T(2,p)\cs T(p, \, p+1))$.
\end{theorem}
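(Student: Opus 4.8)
The plan is to invoke the secondary Upsilon invariant $\U^{2}_{K,t}$ of Kim and Livingston \cite{kim-livingston}: it depends only on the filtered chain homotopy type of $\cfk^\infty(K)$ up to the addition of acyclic complexes --- that is, on the stable equivalence class --- and the set of parameters $t$ at which it is defined depends only on $\U_{K}$. By \cite{feller-krcatovich} the knots $T(p,p+2)$ and $T(2,p)\cs T(p,p+1)$ have equal Upsilon functions, so their complexes cannot be separated by $\U$, and the primary invariant has the same singularities for both; at any such singular value $t_{0}=t_{0}(p)$ the function $\U^{2}_{K,t_{0}}$ is defined for both choices of $K$. It therefore suffices to exhibit one value $t_{0}$ with
\[
\U^{2}_{T(p,p+2),\,t_{0}}\ \neq\ \U^{2}_{T(2,p)\cs T(p,p+1),\,t_{0}}
\]
as piecewise-linear functions of the secondary parameter.

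First I would record the complexes. Every torus knot is an $L$-space knot, so $\cfk^\infty(T(a,b))$ is filtered chain homotopy equivalent to a staircase complex $\mathrm{St}(a,b)$ determined by the semigroup (equivalently, the gap function) of $T(a,b)$; in particular $\cfk^\infty(T(2,p))$ is the staircase all of whose steps have length one. For a staircase there is a purely combinatorial description of both $\U$ and $\U^{2}$: for each value of the Upsilon parameter one selects a ``pivot'' generator realizing the governing extremum, and at a singular $t_{0}$ --- where two pivots compete --- the value of $\U^{2}_{K,t_{0}}$ is read off from the filtration level at which those two pivots become homologous in the complex. I would run this recipe on $\mathrm{St}(p,p+2)$, choosing $t_{0}$ near the first singularity of $\U_{T(p,p+2)}$ so that only a bounded number of generators are relevant, and obtain a closed form (in $p$) for $\U^{2}_{T(p,p+2),\,t_{0}}$ together with an explicit description of the two competing pivots.

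Next comes the connected sum, where $\cfk^\infty(T(2,p)\cs T(p,p+1))\simeq \mathrm{St}(2,p)\otimes\mathrm{St}(p,p+1)$. This tensor product is not a staircase, so the recipe above must be adapted. I would proceed either (i) by simplifying the tensor product directly --- since $\mathrm{St}(2,p)$ has only length-one steps, $\mathrm{St}(2,p)\otimes\mathrm{St}(p,p+1)$ is a manageable ``thickened staircase'', and one can locate the competing pivots and the connecting filtration level near $t_{0}$ by hand --- or (ii) by combining the connected-sum inequalities for $\U^{2}$ of \cite{kim-livingston} with the staircase computations of $\U^{2}_{T(2,p),\,t_{0}}$ and $\U^{2}_{T(p,p+1),\,t_{0}}$. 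Comparing the resulting value with that of $T(p,p+2)$ from the previous step should produce a strict discrepancy; the case $p=5$ is Theorem~\ref{main thm}, and $p=7$ gives the asserted statement for $T(7,9)$.

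I expect the main obstacle to be controlling the connected-sum side uniformly in $p$: both the staircases $\mathrm{St}(p,p+1)$ and $\mathrm{St}(p,p+2)$ and their tensor product grow with $p$, so the task is to package the pivot-and-filtration bookkeeping into a formula that is, up to a predictable shift, independent of $p$, and then to verify that the discrepancy between the two secondary invariants persists for every odd $p\ge 5$ --- in particular that it does not shrink to $0$ as $p\to\infty$. The device that should make this tractable is choosing $t_{0}$ so as to isolate the first singularity of $\U_{T(p,p+2)}$, where only a few generators near the top of each staircase participate in the relevant cycles.
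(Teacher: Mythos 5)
Your framework is the paper's: use invariance of $\U^2$ under stable equivalence, use Feller--Krcatovich to guarantee a common singularity of $\U'$, and compare the two secondary invariants there by locating the competing pivots and the filtration level at which they become homologous, working directly on the staircase for $T(p,p+2)$ and on the tensor product $\mathrm{St}(2,p)\otimes\mathrm{St}(p,p+1)$ (your option (i)). But the proposal stops exactly where the proof begins, and the specific guidance you do give points the wrong way. First, the choice of singularity is not innocuous, and ``the first singularity of $\U_{T(p,p+2)}$'' fails: for $p=5$ the first singularity is at $t=2/5$, where the staircase pivots are $(0,12)$ and $(1,8)$, joined by the boundary of $(1,12)$; in the tensor product the pivots are $\alpha_0\otimes\beta_0=(0,12)$ and $\alpha_0\otimes\beta_2=(1,8)$, joined by the boundary of $\alpha_0\otimes\beta_1=(1,12)$ at the very same filtration level, so both secondary invariants equal $-\frac{8}{5}$ and nothing is distinguished. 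The paper instead works at $t_0=\frac{4}{p}$ (the \emph{second} singularity), where the local pictures genuinely differ. Second, the entire content of the theorem is the verification that in the tensor product the two pivots do \emph{not} become homologous at the level $r=\frac{p^2+p-7}{p}$ where the staircase's pivots do: one must enumerate the Maslov-grading-1 generators of $\calf_{\frac{4}{p},r}$ (there are exactly four, $\alpha_1\otimes\beta_2$, $\alpha_1\otimes\beta_4$, $\alpha_3\otimes\beta_2$, $\alpha_3\otimes\beta_4$) and check that no $\F_2$-combination of their boundaries equals $\alpha_0\otimes\beta_2+\alpha_0\otimes\beta_4$. You assert this ``should produce a strict discrepancy'' but give no argument, and a priori the values could coincide, as they do at the first singularity.

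Your fallback option (ii) is also not viable as stated: the Kim--Livingston connected-sum relations for $\U^2$ are one-sided inequalities, so they cannot by themselves force the strict inequality $\U^2_{T(2,p)\cs T(p,p+1),t_0}(t_0)<\U^2_{T(p,p+2),t_0}(t_0)$ that the argument requires; moreover $t_0=\frac{4}{p}$ need not be a singularity of $\U_{T(2,p)}$ or of $\U_{T(p,p+1)}$ separately, so the summand-wise secondary invariants you propose to feed into such inequalities may not even be defined in the restricted sense used here. Finally, your worry about the discrepancy ``shrinking to $0$ as $p\to\infty$'' is beside the point: for each fixed $p$ one only needs a strict inequality, which the paper obtains by showing $\gamma^2_{T(2,p)\cs T(p,p+1),\frac{4}{p}}(\frac{4}{p})>\frac{p^2+p-7}{p}$ without computing its exact value.
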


Furthermore, in their original paper introducing the secondary {\it Upsilon} invariants, Kim and Livingston \cite{kim-livingston} showed that $\U ^2$ is stronger than $\U$ for a single pair of knots, as well as a family of complexes which are not known to be knot complexes.  We give the first example of a family of knots for which $\U ^2$ is stronger.


\section{Knot complexes, $\cfki $} \label{basics}
To each knot $K \subset S^3$, we can associate a  chain complex $\cfki$ (see \cite{os2}).  It is equipped with a grading (called the {\it Maslov} grading) having the property that the boundary map decreases gradings by 1. The complex $\cfki$ is also bifiltered --- each element $x$ has an {\it algebraic} and an {\it Alexander} filtration, denoted by $\alg (x)$ and $\alex (x)$ respectively.  We consider these complexes up to bifiltered chain homotopy equivalence, denoted here by $\simeq$.    

We can represent $\cfki$ as a diagram in the $(\alg,\alex)$--plane, as in Figure ~\ref{fig34-labeled}.  Let $\calb$ be a bifiltered basis for $\cfki$.  Then each element $x\in \calb$ is represented by the point $(\alg (x), \alex (x))$ and the boundary map is indicated by arrows; for example, $\partial (b) = a+c$.  (We discuss the case when two basis elements have the same filtration levels below.)  Throughout this paper, when it will cause no confusion, we will use the $(i,j)$ coordinates interchangeably with the basis element at those filtration levels.  Here white dots represent elements at grading 0 and black dots represent elements at grading 1.  

\begin{figure}[ht]
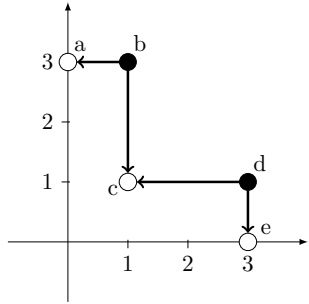

  \centering
  \resizebox{4cm}{4cm}{\includestandalone{fig34-labeled}}
  \caption{The knot complex for the torus knot $T(3,4)$, $\cfk ^{\infty}(T(3,4))$}
  \label{fig34-labeled}
\end{figure}

The chain complex $\cfki$ also has a compatible $\F[U,U^{-1}]$ structure, where $\F$ is the field of two elements.  The action of $U$ decreases both filtration levels by 1 and the Maslov grading by 2.  To form the diagram of the full complex, we take all integer diagonal translates of the diagram shown.  Unless we need to use the $U$-action, we will hide this structure in any diagrams.

In general, for a given knot $K$, $\cfki$ may have multiple elements at the same filtration levels.  In this case, we use a grid to show the complex and each bifiltered basis element at filtration level $(i,j)$ will be shown in the unit square above and to the right of the point $(i,j)$.  For $L$--space knots, however, the complex $\cfki$ is always a {\it staircase} complex, as in Figure ~\ref{fig34-labeled}.    In this case, the height and width of each step is determined by the gaps in the exponents of the Alexander polynomial of $K$; the Alexander polynomial can be written as 
$$\Delta_K(t)=\sum_{i=0}^d (-1)^i t^{a_i}$$
for some $\{a_i\}$ and $\cfki$ is a staircase of the form $$[a_1-a_0, a_2-a_1,...,a_d-a_{d-1}]$$ where the indices alternate between horizontal and vertical steps.  For more details, see \cite{os3} and \cite{borodzik-livingston}.  For example, for $K=T(3,4)$, 
$$\Delta_K(t)=1-t+t^3-t^5+t^6,$$
so $\cfki$ is of the form $$[1-0, 3-1, 5-3, 6-5] = [1,2,2,1]$$ as shown.

For use in later sections, we record some properties of the complex $\cfki$.
\begin{theorem}[\hspace{0.1pt}\cite{os1, hom2}] For knots $K,J\subset S^3$, 
\begin{enumerate}
\item $\cfk ^{\infty}(K)\otimes \cfk ^{\infty}(J)\simeq\cfk ^{\infty}(K\cs J)$.
\item $\cfk ^{\infty}(-K)\simeq \cfk ^{\infty}(K)^*$.  In terms of the diagram, $\cfk ^{\infty}(K)^*$ is obtained from $\cfk ^{\infty}(K)$ via a $180\degree$ rotation (each bifiltered basis element $(i,j)$ in $\cfk ^{\infty}(K)$ is represented by $(-i,-j)$ in $\cfk ^{\infty}(-K)$) and reversing all arrows.
\item if $K$ and $J$ are concordant knots, then there are acyclic complexes $A_1$ and $A_2$ such that $\cfk ^{\infty}(K)\oplus A_1 \simeq \cfk ^{\infty}(J)\oplus A_2$.
\end{enumerate}
\end{theorem}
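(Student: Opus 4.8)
The plan is to establish the three items separately, each by a standard Heegaard-diagrammatic argument, and then to note that (3) follows formally from (1) and (2) together with one geometric input about slice knots. For (1), the K\"unneth-type formula, I would start from doubly-pointed Heegaard diagrams $(\Sigma_K,\alpha_K,\beta_K,w_K,z_K)$ and $(\Sigma_J,\alpha_J,\beta_J,w_J,z_J)$ for $(S^3,K)$ and $(S^3,J)$, and form the connected sum of the two Heegaard surfaces near the basepoints $w_K$ and $w_J$; the result is a doubly-pointed Heegaard diagram for $(S^3,K\cs J)$ whose intersection points are precisely pairs $(\mathbf x,\mathbf y)$ of intersection points from the two factors. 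Directly from the definitions of the algebraic and Alexander filtrations in terms of $w$- and $z$-multiplicities of domains, the bifiltration level of $(\mathbf x,\mathbf y)$ is the sum of those of $\mathbf x$ and $\mathbf y$, so the underlying bifiltered module is the tensor product. The real content is a neck-stretching argument: as the length of the connecting neck tends to infinity, the holomorphic disks contributing to the differential of the connected-sum complex degenerate into pairs of disks, one in each factor, so after a bifiltered change of basis the differential agrees with the tensor-product differential. This is exactly the argument Ozsv\'ath--Szab\'o give for $\hfk$ in \cite{os1}, upgraded so that both filtrations are tracked; the main obstacle is the usual transversality and gluing bookkeeping for the degenerating disks.

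For (2), I would use that $-K$ is the reverse of the mirror of $K$ and that a Heegaard diagram for $-K$ is obtained from one for $K$ by reversing the orientation of the Heegaard surface and interchanging the roles of the $\alpha$- and $\beta$-curves, while relabeling the two basepoints to account for the reversal of the knot orientation. Interchanging the $\alpha$- and $\beta$-handlebodies reverses every holomorphic disk and hence replaces the complex by its $\F[U,U^{-1}]$-linear dual; keeping track of the $w$- and $z$-multiplicities through this interchange shows that both the algebraic and the Alexander filtration get negated, which is precisely the $180\degree$ rotation $(i,j)\mapsto(-i,-j)$ together with arrow-reversal asserted in the statement. The only delicate point is the normalization of the Maslov grading under dualization, which I would pin down by checking it on the unknot.

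For (3), rather than working directly with a concordance cobordism I would invoke Hom's reduction from \cite{hom2}: if $K$ and $J$ are concordant then $K\cs(-J)$ is slice, and for any slice knot $S$ the complex $\cfk^\infty(S)$ is stably equivalent to the complex $\F[U,U^{-1}]$ of the unknot. This last fact is the one genuinely geometric ingredient, coming from the maps induced by a slice disk and realizing the required stabilization by acyclic complexes. Granting it, stable equivalence is compatible with $\otimes$ by (1) (tensoring with an acyclic complex yields an acyclic complex), so $\cfk^\infty(K)\otimes\cfk^\infty(-J)$ is stably equivalent to $\F[U,U^{-1}]$; tensoring with $\cfk^\infty(J)$ and using (1), (2), and the fact that $J\cs(-J)$ is slice shows that each of $\cfk^\infty(K)$ and $\cfk^\infty(J)$ is stably equivalent to $\cfk^\infty(K)\otimes\cfk^\infty(-J)\otimes\cfk^\infty(J)$, hence to one another. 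The main obstacle here is the slice-knot input; once that is in hand, the remainder is formal manipulation of the stable-equivalence relation.
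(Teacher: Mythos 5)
This theorem is stated in the paper as background and is not proved there: it is quoted with citations to Ozsv\'ath--Szab\'o and Hom, so there is no in-paper argument to compare yours against. Your sketch is a faithful outline of the standard proofs in those references: the K\"unneth formula via connected sum of doubly-pointed diagrams and degeneration of holomorphic disks for (1), the $\alpha$--$\beta$ swap and orientation reversal giving the dual complex for (2), and for (3) the reduction to the one genuinely geometric lemma that a slice knot's complex splits as $\F[U,U^{-1}]$ plus an acyclic summand, followed by the formal tensor-product manipulation (which checks out, using that tensoring a free complex with an acyclic one yields an acyclic complex over the PID $\F[U,U^{-1}]$). The only caveat is that the two hard points --- the gluing analysis in (1) and the slice-disk input in (3) --- are deferred to the literature rather than carried out, but you identify them correctly as the load-bearing steps, which is appropriate for a cited background result.
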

\noindent The third property leads to the following definition.
\begin{definition} (\hspace{0.1pt}\cite{hom2}).
Two knot complexes $\cfk ^{\infty}(K_1)$ and $\cfk ^{\infty}(K_2)$ are called {\it stably equivalent} if there are acyclic complexes $A_1$ and $A_2$ such that $\cfk ^{\infty}(K_1)\oplus A_1 \simeq \cfk ^{\infty}(K_2)\oplus A_2$.
\end{definition}
\noindent See, for instance, \cite{os1} for a more detailed description of the $\cfki$ complex and \cite{hom2} for more discussion on stable equivalence.


\section{The Upsilon Invariant}
For a knot $K$ and $t\in [0,2]$, a filtration can be defined on $\cfki$ by the function
$$\frac{t}{2}\,\alex (x)+\left(1-\frac{t}{2}\right)\alg (x).$$
  Based on this filtration, we define a family of subcomplexes of $\calf_{t,s} \subset \cfki$ by
$$\calf_{t,s}=\left\{x\in \calb \left| \left(\frac{t}{2}\alex (x)+\left(1-\frac{t}{2}\right)\alg (x)\right)\leq s\right\}\right.$$
for $t\in [0,2]$ and $s\in\R$ where $\calb$ is a bifiltered basis for $\cfki$.  The subcomplex is independent of the choice of basis. Diagrammatically, the subcomplex $\calf_{t,s}$ is represented as a half-space with boundary line 
$$\frac{t}{2}j+\left(1-\frac{t}{2}\right)i = s$$
which has slope $m=1-\frac{2}{t}$ and $j$--intercept $b=\frac{2s}{t}$.  We call this boundary line the {\it support line} and denote it by $\call_{t,s}$.  
\begin{definition}
Let
$$\gamma_K(t) = \text{min} \left\{s\, |\, H_0(F_{t,s}) \longrightarrow H_0(\cfki) \text{ is surjective} \right\}.$$
In \cite{oss}, Ozsv\'ath, Stipsicz, and Szab\'o define the knot invariant Upsilon $\U_K(t)$ for $t\in[0,2]$.  In \cite{livingston1}, it is shown that $\U_K(t)=-2\cdot \gamma_K(t).$
\end{definition}
\begin{theorem}[As in \cite{oss}]
For knots $K,J \subset S^3$,
\begin{enumerate}
\item $\U_K(t)$ is piecewise linear.
\item $\U_{K\cs  J}(t) = \U_{K}(t)+\U_{J}(t)$.
\item $\U_{-K}(t)=-\U_{K}(t)$.
\item If $K$ is slice, $\U_K(t)=0.$
\end{enumerate}
\end{theorem}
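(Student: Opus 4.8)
The plan is to deduce all four statements from the reformulation $\U_K(t) = -2\gamma_K(t)$ of \cite{livingston1} together with the properties of $\cfki$ recorded above. Concretely, I would first rewrite
\[
\gamma_K(t) \;=\; \min_{z}\;\max_{x\in \operatorname{supp}(z)} \ell_t(x), \qquad \ell_t(x):=\tfrac{t}{2}\alex(x)+\bigl(1-\tfrac{t}{2}\bigr)\alg(x),
\]
where $z$ runs over all cycles of $\cfki$ homologous to the generator of $H_0(\cfki)$: a cycle lies in $\calf_{t,s}$ exactly when each element of its support has $\ell_t$--value at most $s$, so this $\min$--$\max$ agrees with the defining $\min\{s : H_0(\calf_{t,s})\to H_0(\cfki)\text{ surjective}\}$. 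Everything below is a manipulation of this formula, or a reduction to the listed properties of $\cfki$.

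\textbf{Piecewise linearity.} For a \emph{fixed} representative $z$, the map $t\mapsto \max_{x\in\operatorname{supp}(z)}\ell_t(x)$ is a maximum of finitely many affine functions on $[0,2]$, hence continuous, convex and piecewise linear. The real work is to replace the minimum over the infinite set of representatives by a minimum over finitely many of them: since $\cfki$ is finitely generated over $\F[U,U^{-1}]$ and its differential connects only lattice points within a bounded box, one can modify any representative by boundaries until its support lies in a fixed bounded region of the $(\alg,\alex)$--lattice, namely the region on which $\ell_t$ stays below the (uniform over $t\in[0,2]$) bound on $\gamma_K$. Only finitely many $\F_2$--cycles are supported there, so $\gamma_K$ is a pointwise minimum of finitely many piecewise-linear functions and is therefore piecewise linear. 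I expect the bookkeeping in this truncation to be the main obstacle; it is carried out in \cite{livingston1}, and \cite{oss} reaches the same conclusion directly from the $t$--modified complex.

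\textbf{Connected sum.} Here I would use $\cfk^{\infty}(K\cs J)\simeq\cfk^{\infty}(K)\otimes\cfk^{\infty}(J)$, the additivity of $\alex$ and $\alg$ (hence of $\ell_t$) on elementary tensors, and the fact that the generator of $H_0$ of the tensor product is the tensor of the generators. If $z_K$ and $z_J$ realize $\gamma_K(t)$ and $\gamma_J(t)$, then $z_K\otimes z_J$ represents the generator and has support $\operatorname{supp}(z_K)\times\operatorname{supp}(z_J)$, which gives at once
\[
\gamma_{K\cs J}(t)\;\le\;\max_{x\in\operatorname{supp}(z_K)}\ell_t(x)+\max_{y\in\operatorname{supp}(z_J)}\ell_t(y)\;=\;\gamma_K(t)+\gamma_J(t).
\]
The reverse inequality is the substantive half: it comes from a Künneth-type argument for the filtered complexes $\calf_{t,s}^{K\cs J}=\bigcup_{s_1+s_2=s}\calf_{t,s_1}^{K}\otimes\calf_{t,s_2}^{J}$ over the field $\F$ (no Tor contributions), which forces any representative of the generator to contain a tensor factor of filtration level at least $\gamma_K(t)$ and one at least $\gamma_J(t)$; I would cite \cite{oss} for this step.

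\textbf{Mirrors and slice knots.} For the slice statement: a slice knot is concordant to the unknot, so by part (3) of the Theorem above its complex is stably equivalent to $\cfk^{\infty}$ of the unknot; the latter is $\F[U,U^{-1}]$ supported at the origin, so $\gamma\equiv 0$ there, and since $\U$ is an invariant of stable equivalence (as recalled in the introduction) we conclude $\U_K\equiv 0$. The mirror statement then follows formally: $K\cs(-K)$ is slice, so combining the slice statement with additivity gives $0=\U_{K\cs(-K)}(t)=\U_K(t)+\U_{-K}(t)$, i.e.\ $\U_{-K}(t)=-\U_K(t)$. (Alternatively one can prove the mirror statement directly from $\cfk^{\infty}(-K)\simeq\cfk^{\infty}(K)^{*}$ and the fact that the $180^\circ$ rotation negates $\ell_t$, combined with the universal-coefficients duality pairing on $H_0$, but the route through additivity is shorter.)
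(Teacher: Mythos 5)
This theorem is stated in the paper purely as background, with attribution to \cite{oss}; the paper contains no proof of it, so there is nothing internal to compare your argument against line by line. Taken on its own terms, your sketch is correct and follows the route of \cite{livingston1} (working with $\gamma_K$ and the subcomplexes $\calf_{t,s}$) rather than the original $t$--modified-complex definition of \cite{oss}; the min--max reformulation of $\gamma_K(t)$, the easy inequality $\gamma_{K\cs J}\leq\gamma_K+\gamma_J$ via $z_K\otimes z_J$, and the derivation of (3) from (2) and (4) are all sound, and there is no circularity in deducing (4) from concordance to the unknot plus stable-equivalence invariance, since the latter is established independently. Two remarks. First, the finiteness step you flag as ``the main obstacle'' in the piecewise-linearity argument is easier than you suggest: representatives of the generator of $H_0(\cfki)$ live in Maslov grading $0$, and the grading-$0$ part of $\cfki$ is already finite-dimensional over $\F$ (each of the finitely many $\F[U,U^{-1}]$--generators of even grading contributes exactly one element), so $\gamma_K(t)$ is a minimum over a \emph{finite} set of cycles with no truncation needed. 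Second, the two genuinely substantive steps --- that finiteness reduction and, more seriously, the lower bound $\gamma_{K\cs J}\geq\gamma_K+\gamma_J$ via the filtered K\"unneth argument --- are deferred to the literature rather than proved; that is acceptable for reproving a quoted background result, but it means your write-up is a correct outline rather than a self-contained proof, with the connected-sum lower bound being the one place where real work remains.
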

\noindent Based on these properties, we get the following corollary:
\begin{corollary}[\hspace{0.1pt}\cite{livingston1}]
If $K,J \subset S^3$ are concordant knots, then $\U_K(t)=\U_J(t)$.
\label{ups conc}
\end{corollary}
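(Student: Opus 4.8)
The plan is to obtain this as an immediate formal consequence of the four properties of $\U_K(t)$ recorded in the preceding theorem, together with the standard reformulation of concordance. First I would recall that two knots $K$ and $J$ are concordant if and only if the knot $K \cs (-J)$ is slice; here $-J$ is the reverse mirror of $J$, and this is simply the statement that in the smooth concordance group $\calc$ the class $[-J]$ is the inverse of $[J]$ while the slice knots constitute the identity element. Thus ``$K$ is concordant to $J$'' is equivalent to $[K \cs (-J)] = 0$ in $\calc$.

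Given this reformulation, the argument is a one-liner. Since $K \cs (-J)$ is slice, property (4) of the theorem gives $\U_{K \cs (-J)}(t) = 0$ for all $t \in [0,2]$. Property (2), additivity under connected sum, gives $\U_{K \cs (-J)}(t) = \U_K(t) + \U_{-J}(t)$, and property (3) gives $\U_{-J}(t) = -\U_J(t)$. Chaining these three identities yields $0 = \U_K(t) - \U_J(t)$, hence $\U_K(t) = \U_J(t)$ for every $t \in [0,2]$, which is the assertion.

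The main point to verify is therefore not any analytic input but simply that nothing is lost in this formal manipulation: properties (2)--(4) are quoted verbatim from the theorem, and the only external ingredient is the elementary characterization of concordance via sliceness of $K \cs (-J)$. Consequently I do not anticipate any genuine obstacle here; the substance of the corollary lies entirely in the theorem it follows from, and this deduction is precisely the standard way in which the additivity and mirror behavior of $\U$ are combined to show that $\U$ descends to a concordance invariant.
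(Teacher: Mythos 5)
Your proof is correct and is exactly the deduction the paper intends: the corollary is stated as following ``based on these properties,'' i.e.\ by applying sliceness of $K \cs (-J)$ together with additivity and the mirror formula for $\U$. No discrepancy with the paper's (implicit) argument.
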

\noindent The {\it Upsilon} invariant is also a stable equivalence invariant.  Feller and Krcatovich gave the following relation.
\begin{theorem}[\hspace{0.1pt}\cite{feller-krcatovich}]\label{feller-krcatovich}
Let $p<q$ be coprime integers.  Then 
$$\U_{T(p, \, q)}(t)=\U_{T(p,\, q-p)}(t)+\U_{T(p, \, p+1)}(t).$$
\end{theorem}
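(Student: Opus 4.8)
The plan is to reduce the statement to an explicit computation for $L$--space knots. Each of $T(p,q)$, $T(p,\,q-p)$, and $T(p,\,p+1)$ is an $L$--space knot, with the convention that $T(p,1)$ is the unknot (this settles the case $q=p+1$, so we may assume $q\geq p+2$; here $T(p,\,q-p)$ is read as an unoriented torus knot even when $q-p<p$). Thus each of the three complexes $\cfk ^{\infty}$ is a staircase, determined by the corresponding numerical semigroup $\langle a,b\rangle$, equivalently by the exponents of the Alexander polynomial. By the additivity of $\U$ under connected sum together with the tensor-product formula $\cfk ^{\infty}(K)\otimes\cfk ^{\infty}(J)\simeq\cfk ^{\infty}(K\cs J)$, the right-hand side of the claimed identity equals $\U _{T(p,\,q-p)\cs T(p,\,p+1)}(t)$, so it suffices to prove $\U _{T(p,q)}(t)=\U _{T(p,\,q-p)\cs T(p,\,p+1)}(t)$ for all $t\in[0,2]$.

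Next I would make both sides completely explicit. For an $L$--space knot $K$ the generator of $H_0(\cfk ^{\infty}(K))\cong\F$ is represented by each of the finitely many Maslov-grading-$0$ vertices of its staircase, and for $H_0(\calf _{t,s})\to H_0(\cfk ^{\infty}(K))$ to be onto it is enough --- and necessary --- that the support line $\call _{t,s}$ pass weakly above one such vertex; so the definition of $\gamma_K$ unwinds to
$$\gamma_K(t)=\min_{z}\left(\frac{t}{2}\,\alex (z)+\Bigl(1-\frac{t}{2}\Bigr)\alg (z)\right),$$
the minimum taken over those vertices $z$ of the staircase in its standard conjugation-symmetric position. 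Hence $\U _K(t)=-2\gamma_K(t)$ is a minimum of finitely many affine functions of $t$, with slopes and intercepts read straight off $\langle a,b\rangle$; equivalently, one may invoke the Ozsv\'ath--Stipsicz--Szab\'o piecewise-linear formula for $\U _{T(p,q)}$ from \cite{oss}, or express $\U _{T(p,q)}$ through the torsion coefficients $V_s$. Substituting on both sides of the desired identity turns it into an equality of explicit piecewise-linear functions: on the right, $\U$ of the connected sum is the minimum over all sums $\ell+\ell'$ where $\ell$ is one of the affine functions coming from $T(p,\,q-p)$ and $\ell'$ one of those coming from $T(p,\,p+1)$.

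The remaining step --- the heart of the argument, and what I expect to be the main obstacle --- is this resulting combinatorial identity. As a warm-up one checks it for $t$ near $0$, where it collapses to additivity of the Seifert genus, $g(T(p,q))=g(T(p,\,q-p))+g(T(p,\,p+1))$, immediate from $g(T(a,b))=(a-1)(b-1)/2$; one then propagates the comparison across the breakpoints of the two piecewise-linear functions. It is natural to organize this as a well-founded induction, since each torus knot on the right has strictly smaller sum of parameters than $T(p,q)$. The difficulty is that the underlying Alexander polynomials --- hence the semigroups, hence the full complexes $\cfk ^{\infty}$, which already have different ranks of $\hfk$ --- of $T(p,q)$ and $T(p,\,q-p)\cs T(p,\,p+1)$ genuinely differ (for instance $\Delta _{T(p,q)}$ is not the product $\Delta _{T(p,\,q-p)}\cdot\Delta _{T(p,\,p+1)}$), so only the lower envelopes of the two families of affine functions --- the data $\U$ actually records --- can be expected to agree. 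Determining which staircase vertices, equivalently which $V_s$, contribute to the minimum on each linear piece, and verifying that these contributions match on the two sides, is where the work lies; the relevant input is the Euclidean-algorithm structure of the pair $(p,q)$, which governs the gaps of $\langle p,q\rangle$ and hence the shape of the staircase.
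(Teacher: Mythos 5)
This theorem is not proved in the paper at all: it is imported verbatim from Feller--Krcatovich \cite{feller-krcatovich}, so there is no internal proof to compare your route against. Judged on its own terms, your write-up is a correct and sensible \emph{reduction} of the statement, but not a proof of it. The setup is fine: additivity of $\U$ under connected sum reduces the claim to $\U_{T(p,q)}=\U_{T(p,\,q-p)\cs T(p,\,p+1)}$; for an $L$--space knot the map $H_0(\calf_{t,s})\to H_0(\cfki)$ is surjective exactly when some Maslov-grading-$0$ staircase vertex lies weakly below $\call_{t,s}$, so $-\tfrac12\U_K(t)$ is the minimum of finitely many affine functions indexed by those vertices; and your check at $t$ near $0$ (additivity of $\tau$, i.e.\ of genus, $(p-1)(q-1)/2=(p-1)(q-p-1)/2+(p-1)p/2$) is correct.

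The gap is that the resulting combinatorial identity --- that the lower envelope of the affine functions attached to the staircase of $T(p,q)$ coincides with the sum of the lower envelopes for $T(p,\,q-p)$ and $T(p,\,p+1)$ --- is the entire content of the theorem, and you explicitly defer it (``where the work lies'') rather than prove it. Nothing in your sketch identifies which vertices realize the minimum on each linear piece, sets up the induction on the Euclidean algorithm in a way that actually closes, or explains how the semigroup $S_{p,q}$ decomposes against $S_{p,\,q-p}$ and $S_{p,\,p+1}$ so that the minima match; ``propagate the comparison across the breakpoints'' is a statement of intent, not an argument. As it stands the proposal would be accepted as a reduction lemma plus a plan, but the theorem itself remains unproved; to complete it you would need the explicit description of $\U_{T(p,q)}$ in terms of $S_{p,q}$ (as in \cite{feller-krcatovich} or \cite{borodzik-livingston}) and a genuine verification of the envelope identity, which is where Feller and Krcatovich in fact do all of their work.
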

\noindent Thus $\U$ cannot differentiate between the stable equivalence classes of $T(p,q)$ and 
\newline \noindent $T(p,q-p)\cs  T(p, \, p+1)$.


\section{Secondary Upsilon Invariants}
In \cite{kim-livingston}, Kim and Livingston defined the family of secondary {\it Upsilon} invariants $\U_{K,t}^{2}(s)$.  For our purposes, we may restrict to a situation where the definition is simple.  We will consider only knots $K$ such that $\Delta\U'_K(t)>0$ and we will define $\U_{K,t}^2(t)$ (removing the dependence on $s$ in the original definition) at $t$ which are singularities of $\U_K'(t)$.

Let $K\subset S^3$ and $t\in[0,2]$ and denote $$\calf_t:=\calf_{t, \gamma_K(t)}.$$
Let $t_0\in [0,2]$ be a singularity of $\U_K'(t)$.  If $\U_K'(t)>0$, then, for $\delta$ small enough, the set of cycles which are not boundaries $\mathcal{C}_{t_0}$ in $\calf_{t_0}$ is split into two disjoint sets $\mathcal{C}_{t_0+\delta}$ and $\mathcal{C}_{t_0-\delta}$; the sets of cycles which are not boundaries in $\calf_{t_0+\delta}$ and $\calf_{t_0-\delta}$ respectively.

\begin{definition}
For each $t_0\in [0,2]$ which is a singularity of $\U_K'(t)$, let
\begin{equation*}
\gamma_{K,t_0}^2(t_0)=\text{min} \left\{ r \left|
\begin{array}{c}
\exists \,x_{\pm}\in \calc _{t_{0} \pm \delta} \text{ such that } x_- \text{ and } x_+ \text{ represent }\\ \text{the same class in } H_0(\calf_{t_0}+\calf_{t_0,r})
\end{array} \right\}\right..
\end{equation*}
Then the secondary {\it Upsilon} invariants defined by Kim and Livingston \cite{kim-livingston} are given by 
$$\Upsilon_{K,t_0}^2(t_0) = -2\cdot (\gamma_{K,t_0}^2(t_0)-\gamma_{K}(t_0)).$$
\end{definition}
As an example,  in Figure ~\ref{fig34-ups2}, we see that $\U'_{T(3,4)}(t)$ has a singularity at $t_0 = \frac{2}{3}$.  Then $\calc_{t_0+\delta}=\{x_+\}$ and $\calc_{t_0-\delta}=\{x_-\}$ where $x_+$ and $x_-$ are represented by the points $(1,1)$ and $(0,3)$ respectively.  Let $z$ be the point represented by $(1,3)$.   We see that $\partial z= x_++x_-$, which implies $\U^2_{T(2,3),\frac{2}{3}}(\frac{2}{3})=-2\cdot \left(\frac{5}{3}-1\right)=-\frac{2}{3}$.
\begin{figure}[ht]
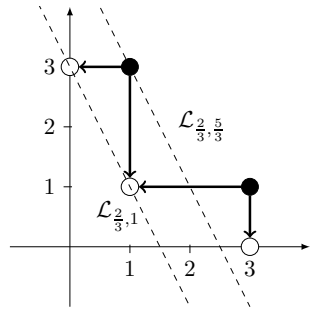

  \centering
  \resizebox{4cm}{4cm}{\includestandalone{fig34-ups2}}
  \caption{$\cfk ^{\infty}(T(3,4))$ with support lines for $t=\frac{2}{3}$.}
  \label{fig34-ups2}
\end{figure}

\begin{theorem}[\hspace{0.1pt}\cite{kim-livingston}]\label{ups2 conc}
$\U^2_{K,t}(s)$ is a stable equivalence invariant.
\end{theorem}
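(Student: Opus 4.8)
The plan is to exploit the fact that stable equivalence of knot complexes is generated by two moves: bifiltered chain homotopy equivalence ($\simeq$) and the addition or deletion of an acyclic direct summand. So I would verify separately that the secondary {\it Upsilon} invariant is unchanged under each move. Invariance under $\simeq$ is essentially formal: $\gamma_{K,t_0}^{2}(t_0)$ is built entirely out of the filtered subcomplexes $\calf_{t,s}$, their zeroth homologies, and the inclusion-induced maps among them; a bifiltered chain homotopy equivalence carries all of this along up to canonical isomorphism, and in particular matches up the singularities $t_0$ of $\U_K'$ and the cycle sets $\calc_{t_0\pm\delta}$. So the real content is invariance under adding an acyclic summand.

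For that, I would write $C' = C\oplus A$ with $A$ acyclic and choose a bifiltered basis of $C'$ that is the union of bifiltered bases of $C$ and of $A$, so that $\calf_{t,s}(C')=\calf_{t,s}(C)\oplus\calf_{t,s}(A)$ for all $t,s$. First I would recover the (known) fact that $\gamma_K$, hence $\U_K$, is unchanged: since $A$ is acyclic, inclusion gives $H_0(C)\cong H_0(C')$, and every class of $H_0(\calf_{t,s}(A))$ maps to $0$ in $H_0(A)=0$, so $H_0(\calf_{t,s}(C'))\to H_0(C')$ is surjective exactly when $H_0(\calf_{t,s}(C))\to H_0(C)$ is. Hence $\U_{C'}=\U_C$, these functions have the same singularities $t_0$, and $\calf_{t_0}(C')+\calf_{t_0,r}(C')$ splits as the direct sum of the corresponding complexes for $C$ and for $A$. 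One also notes that the cycles appearing in $\calc_{t_0\pm\delta}$ lie on the support line $\call_{t_0,\gamma_K(t_0)}$ (being the minimizing cycles from both sides of the corner, they achieve $\gamma_K(t_0)$ there by continuity), hence lie in $\calf_{t_0}$, and that a bifiltered basis element of $C'$ of the form $(x_\pm,a_\pm)$ has $x_\pm\in C$ or $x_\pm=0$.

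Next, for a fixed singularity $t_0$, I would prove $\gamma_{C',t_0}^{2}(t_0)=\gamma_{C,t_0}^{2}(t_0)$ by two inequalities. For ``$\le$'': given $x_\pm\in\calc_{t_0\pm\delta}$ for $C$ and a chain $z\in\calf_{t_0}(C)+\calf_{t_0,r}(C)$ with $\partial z=x_++x_-$ realizing $r=\gamma_{C,t_0}^{2}(t_0)$, the pairs $(x_\pm,0)$ and $(z,0)$ do the same job in $C'$: $(x_\pm,0)$ is a cycle of $\calf_{t_0\pm\delta}(C')$ that is not a boundary there (a bounding chain would have $C$-component bounding $x_\pm$) and represents the generator of $H_0(C')\cong H_0(C)$ because $x_\pm$ does, so $\gamma_{C',t_0}^{2}(t_0)\le r$. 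For ``$\ge$'': given $(x_+,a_+)\in\calc_{t_0+\delta}$ and $(x_-,a_-)\in\calc_{t_0-\delta}$ for $C'$ together with $(z,w)\in\calf_{t_0}(C')+\calf_{t_0,r}(C')$ satisfying $\partial(z,w)=(x_+,a_+)+(x_-,a_-)$, project to the $C$-summand to get $\partial z=x_++x_-$ with $z\in\calf_{t_0}(C)+\calf_{t_0,r}(C)$. The crucial point is that $x_\pm$ still belongs to $\calc_{t_0\pm\delta}$ for $C$: since $(x_\pm,a_\pm)$ represents the nonzero class of $H_0(C')\cong H_0(C)$, its $C$-component $x_\pm$ represents the nonzero class of $H_0(C)$, so $x_\pm$ cannot be a boundary in $\calf_{t_0\pm\delta}(C)$; hence $r\ge\gamma_{C,t_0}^{2}(t_0)$. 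Combining the two inequalities gives $\U_{C',t_0}^{2}(t_0)=\U_{C,t_0}^{2}(t_0)$, and the same bookkeeping with the parameter $s$ retained handles the invariant $\U_{K,t}^{2}(s)$ in general.

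The step I expect to be the main obstacle is the ``$\ge$'' direction, because passing to a direct summand does not in general preserve the property ``cycle that is not a boundary in $\calf_{t_0\pm\delta}$'': the acyclic summand $A$ can carry genuinely new cycles in $\calf_{t_0\pm\delta}(A)$ that bound in $A$ but not in $\calf_{t_0\pm\delta}(A)$. The argument sidesteps this only because the cycles entering the definition of $\gamma^2$ represent the generator of $H_0(\cfk^{\infty})$, forcing them and their $C$-components away from being boundaries in the filtered pieces; making this watertight means using the precise definition of $\calc_{t_0\pm\delta}$ from \cite{kim-livingston} rather than the streamlined version recalled here. Along the way I would also double-check the minor geometric points flagged above --- that the cycles $x_\pm$ lie in $\calf_{t_0}$, so the class comparison in $H_0(\calf_{t_0}+\calf_{t_0,r})$ makes sense, and that the splitting of basis elements is compatible with the support-line filtration.
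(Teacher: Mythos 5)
The paper does not prove this statement at all: Theorem~\ref{ups2 conc} is quoted from Kim and Livingston \cite{kim-livingston}, so there is no in-paper argument to compare yours against. Judged on its own, your outline is sound and follows the route one would expect (and essentially the one in \cite{kim-livingston}): reduce stable equivalence to invariance under bifiltered chain homotopy equivalence plus addition of an acyclic summand, split $\calf_{t,s}(C\oplus A)=\calf_{t,s}(C)\oplus\calf_{t,s}(A)$, and prove the two inequalities for $\gamma^2$ by inclusion and by projection. You have also correctly located the one point where the argument could genuinely fail: with the paper's streamlined description of $\calc_{t_0\pm\delta}$ as ``cycles which are not boundaries in $\calf_{t_0\pm\delta}$,'' an acyclic summand can contribute spurious elements such as $(0,a)$ with $a$ a non-bounding cycle of $\calf_{t_0\pm\delta}(A)$, and projection to $C$ would not land in $\calc_{t_0\pm\delta}(C)$; the theorem is only salvaged because the Kim--Livingston definition requires the cycles to represent the generator of $H_0(\cfki)\cong\F$, which forces the $C$-component to do so as well. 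For a complete write-up you would need to work from that precise definition (and verify the small continuity claim that the cycles of $\calc_{t_0\pm\delta}$ lie in $\calf_{t_0}$ for $\delta$ small, so the comparison in $H_0(\calf_{t_0}+\calf_{t_0,r})$ is meaningful), but there is no conceptual gap in your plan.
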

  
\section{Results}
We begin with a proof of Theorem ~\ref{main thm}, then use the same procedure to prove the general theorem.
\subsection{The case of $p=5$} 
$ $ \newline
To prove Theorem \ref{main thm}, we compute that when $t_0=\frac{4}{5}$\
$$\U^2_{T(5,7),t_0}(t_0)\neq \U^2_{T(2,5)\cs T(5,6),t_0}(t_0)$$ and then apply Theorem ~\ref{ups2 conc}.
\begin{proposition}
$\U^2_{T(5,7),\frac{4}{5}}(\frac{4}{5})=-\frac{8}{5}.$
\label{prop57}
\end{proposition}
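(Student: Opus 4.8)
The plan is to compute $\Upsilon^2_{T(5,7),t_0}(t_0)$ directly from the staircase complex of $T(5,7)$ at the point $t_0 = \tfrac{4}{5}$. First I would write down the Alexander polynomial of $T(5,7)$ and extract the staircase parameters $[a_1-a_0, a_2-a_1, \dots]$ as described in Section~\ref{basics}; this gives an explicit list of lattice points $(\alg, \alex)$ together with the boundary arrows. Next I would identify the support line $\call_{t_0, \gamma_K(t_0)}$ for $t_0 = \tfrac{4}{5}$: its slope is $m = 1 - \tfrac{2}{t_0} = -\tfrac{3}{2}$, so I would slide this line of slope $-\tfrac{3}{2}$ until $H_0(\calf_{t_0})$ surjects onto $H_0(\cfki)$, thereby computing $\gamma_{T(5,7)}(t_0)$. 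Because $t_0 = \tfrac{4}{5}$ is a singularity of $\Upsilon'_{T(5,7)}$, at this value the support line will pass through (at least) two outer corners of the staircase simultaneously, and the generator of $H_0(\calf_{t_0})$ splits as a sum of the two cycles $x_-$ and $x_+$ supported near those two corners --- these are the classes in $\calc_{t_0 - \delta}$ and $\calc_{t_0 + \delta}$.

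With $x_-$ and $x_+$ in hand, the core computation is to find the minimal $r$ such that some chain $z$ with $\partial z = x_- + x_+$ (modulo $\calf_{t_0}$) lies in $\calf_{t_0, r}$; equivalently, to find the ``cheapest'' element $z$ of the staircase complex whose boundary hits both corner cycles, and to evaluate $\tfrac{t_0}{2}\alex(z) + (1 - \tfrac{t_0}{2})\alg(z)$ at that $z$. In the staircase picture this $z$ is typically the lattice point lying diagonally ``inward'' from the two corners (the analogue of the point $(1,3)$ in the $T(3,4)$ example in Figure~\ref{fig34-ups2}), so I would locate that point, confirm via the boundary formula that $\partial z = x_- + x_+$, and compute its $t_0$-filtration level to get $\gamma^2_{T(5,7),t_0}(t_0)$. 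Then $\Upsilon^2_{T(5,7),\tfrac{4}{5}}(\tfrac{4}{5}) = -2\bigl(\gamma^2_{T(5,7),t_0}(t_0) - \gamma_{T(5,7)}(t_0)\bigr)$, which should come out to $-\tfrac{8}{5}$.

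The step I expect to be the main obstacle is correctly pinning down the two cycles $x_\pm$ and the connecting chain $z$: for a staircase as large as that of $T(5,7)$ one must be careful about which corners the slope-$(-\tfrac{3}{2})$ support line actually meets, whether $z$ is a single basis element or a sum of several, and whether there might be a cheaper connecting chain than the naive diagonal one. I would mitigate this by carefully drawing the staircase with the support lines for $t_0 = \tfrac{4}{5} \pm \delta$ (as in Figure~\ref{fig34-ups2}), checking that the candidate $z$ genuinely has $\partial z = x_- + x_+$ and that no element with strictly smaller $t_0$-filtration connects the two classes in $H_0(\calf_{t_0} + \calf_{t_0,r})$, and finally verifying the arithmetic $-2\bigl(\gamma^2 - \gamma\bigr) = -\tfrac{8}{5}$.
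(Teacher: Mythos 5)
Your proposal follows essentially the same route as the paper: compute the staircase of $T(5,7)$ from its Alexander polynomial, locate the two pivot corners $(1,8)$ and $(3,5)$ on the slope $-\tfrac{3}{2}$ support line (giving $\gamma_{T(5,7)}(\tfrac{4}{5})=\tfrac{19}{5}$), and then find the minimal $r$ for which a chain connecting the two cycles lies in $\calf_{\frac{4}{5},r}$. The one detail your heuristic must accommodate --- and which you correctly flagged as a point to check --- is that these two pivot corners are not adjacent on the staircase, so the connecting chain is the sum of the two inner corners $(2,8)+(3,7)$ rather than a single diagonal point; its filtration level gives $\gamma^2_{T(5,7),\frac{4}{5}}(\tfrac{4}{5})=\tfrac{23}{5}$ and hence $\U^2=-\tfrac{8}{5}$, exactly as in the paper.
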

\begin{proof}
As in Section ~\ref{basics}, we can compute $\cfk ^{\infty}(T(5,7))$ from the gaps in the exponents of the Alexander polynomial:
\begin{equation*}
\begin{array}{cc}
\Delta_{T(5,7)}(t) = & 
 1-t+t^5-t^6+t^7-t^8+t^{10}-t^{11}+t^{12}-t^{13}\\&+t^{14}-t^{16}+t^{17}-t^{18}+t^{19}-t^{23}+t^{24}.
\end{array}
\end{equation*}
So $\cfk ^{\infty}(T(5,7))$ is a staircase complex of the form
$$[1,4,1,1,1,2,1,1,1,1,2,1,1,1,4,1].$$
This yields a bifiltered graded basis $\calb$ for $\cfk ^{\infty}(T(5,7))$.  See Figure ~\ref{fig57} for the diagram for $\cfk ^{\infty}(T(5,7))$.

\begin{figure}[ht]
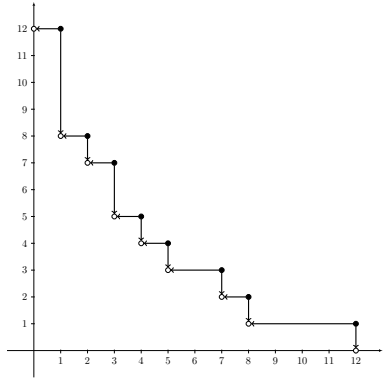

  \centering
  \resizebox{5cm}{5cm}{\includestandalone{fig57}}
  \caption{The knot complex for the torus knot $T(5,7)$, $\cfk ^{\infty}(T(5,7))$}
  \label{fig57}
\end{figure}

To prove the proposition, we first compute $\gamma\,_{T(5,7)}(\frac{4}{5})$.  Recall that 
$$\gamma\,_{T(5,7)}\left(\frac{4}{5}\right)=\text{min} \left\{s\, |\, H_0(F_{\frac{4}{5},s}) \longrightarrow H_0(\cfki) \text{ is surjective} \right\}.$$
So we need to find the minimal $s$ such that $\call_{\frac{4}{5},s}$ contains a bifiltered basis element (or multiple elements) in $\cfk ^{\infty}(T(5,7))$.  We compute that $\call_{\frac{4}{5},s}$ has slope $m=-\frac{3}{2}$ and  $j$--intercept $b=\frac{5s}{2}$.  In Figure ~\ref{fig57-ups2}, one can see that $\call_{\frac{4}{5},s}$ with minimal $s$ passes through the points $(1,8)$ and $(3,5)$.  The $j$-intercept of this line is $\frac{19}{2}$ corresponding to an $s$ value of $\frac{19}{5}$.  Thus $\gamma\,_{T(5,7)}(\frac{4}{5})=\frac{19}{5}$.  Note that near $t=\frac{4}{5}$, the line $\call_{t,s}$ pivots around the two points $(1,8)$ and $(3,5)$.  This causes a change in slope in $\U_K$ and so $t=\frac{4}{5}$ is a singulariy of $\U_K'$.

\begin{figure}[ht]
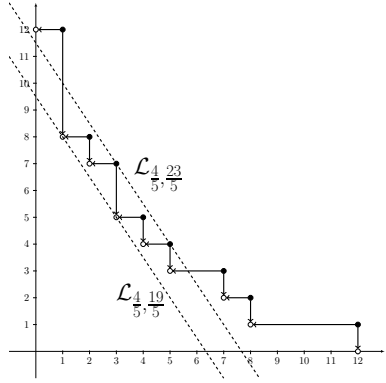

  \centering
  \resizebox{5cm}{5cm}{\includestandalone{fig57-ups2}}
  \caption{Support lines on $\cfk ^{\infty}(T(5,7))$ for $t=\frac{4}{5}$}
  \label{fig57-ups2}
\end{figure}

Now, we turn our attention to secondary Upsilon.  We have that
$$\calc_{\frac{4}{5}}=\{(1,8),(3,5)\},$$
and for small enough $\delta$
$$\calc_{\frac{4}{5}-\delta}=\{(1,8)\} \text{ and } \calc_{\frac{4}{5}+\delta}=\{(3,5)\}.$$
To determine  $\U^2_{T(5,7),\frac{4}{5}}(\frac{4}{5})$, we compute how far the line of slope $-\frac{3}{2}$ needs to be moved so that the elements represented by $(1,8)$ and $(3,5)$ are homologous in $\calf_{\frac{4}{5},r}$.  In the diagram we see that we need $\calf_{\frac{4}{5},r}$ to contain the elements represented by $(3,7)$ and $(2,8)$.  The minimal $r$ which accomplishes this is $r=\frac{23}{5}$, as shown in Figure ~\ref{fig57-ups2}.  So we have that $\gamma^2_{T(5,7),\frac{4}{5}}(\frac{4}{5}) = \frac{23}{5}$.  Thus 
$$\U^2_{T(5,7),\frac{4}{5}}(\frac{4}{5}) = -2\cdot \left(\frac{23}{5}-\frac{19}{5}\right) = -\frac{8}{5}.$$
\end{proof}
\begin{proposition}
$\U^2_{T(2,5)\cs T(5,6),\frac{4}{5}}(\frac{4}{5})<-\frac{8}{5}.$
\end{proposition}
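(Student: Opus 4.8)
The plan is to realize $\cfk^\infty(T(2,5)\cs T(5,6))$ as the tensor product $\cfk^\infty(T(2,5))\otimes\cfk^\infty(T(5,6))$ (Section~\ref{basics}) and then imitate the computation in Proposition~\ref{prop57}. From the Alexander polynomials one reads off the two staircases: $\Delta_{T(2,5)}(t)=1-t+t^2-t^3+t^4$ gives $\cfk^\infty(T(2,5))=[1,1,1,1]$ with basis at $(0,2),(1,2),(1,1),(2,1),(2,0)$, and $\Delta_{T(5,6)}(t)=1-t+t^5-t^7+t^{10}-t^{13}+t^{15}-t^{19}+t^{20}$ gives $\cfk^\infty(T(5,6))=[1,4,2,3,3,2,4,1]$ with basis at $(0,10),(1,10),(1,6),(3,6),(3,3),(6,3),(6,1),(10,1),(10,0)$. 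Tensoring these bases gives a bifiltered basis $\calb'$ of $\cfk^\infty(T(2,5)\cs T(5,6))$ whose elements are products $a\otimes b$ (and their $U$--translates), with $\alg(a\otimes b)=\alg(a)+\alg(b)$ and $\alex(a\otimes b)=\alex(a)+\alex(b)$; since $\call_{\frac45,s}$ has slope $-\frac32$, the position of $a\otimes b$ on the support line, and hence its contribution to $\gamma$ and $\gamma^2$, is controlled by $\alex(a)+\tfrac32\alg(a)+\alex(b)+\tfrac32\alg(b)$, the sum of the two factor positions.

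Next I would locate $\gamma_{T(2,5)\cs T(5,6)}(\frac45)$ and the cycle sets. By Theorem~\ref{feller-krcatovich} with $p=5,q=7$ we have $\U_{T(5,7)}=\U_{T(2,5)}+\U_{T(5,6)}=\U_{T(2,5)\cs T(5,6)}$, so (as $\U=-2\gamma$) $\gamma_{T(2,5)\cs T(5,6)}(\frac45)=\gamma_{T(5,7)}(\frac45)=\frac{19}5$ and $t=\frac45$ is a singularity of $\U'_{T(2,5)\cs T(5,6)}$. A short check shows the pivot sits in the $T(5,6)$ factor: the minimizing generator of the $T(2,5)$ factor stays at $(0,2)$ for $t$ near $\frac45$, while the slope--$(-\frac32)$ support line at level $s=3$ for $\cfk^\infty(T(5,6))$ passes through exactly $(1,6)$ and $(3,3)$, the two pivot generators. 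Hence, for small $\delta$, $\calc_{\frac45-\delta}=\{(0,2)\otimes(1,6)\}$ and $\calc_{\frac45+\delta}=\{(0,2)\otimes(3,3)\}$, so $x_-=(0,2)\otimes(1,6)$ and $x_+=(0,2)\otimes(3,3)$ --- which lie at the coordinates $(1,8)$ and $(3,5)$, exactly as in Proposition~\ref{prop57}. Thus the proposition reduces to showing $\gamma^2_{T(2,5)\cs T(5,6),\frac45}(\frac45)>\frac{23}5$, i.e.\ that $x_+$ and $x_-$ are not homologous in $H_0(\calf_{\frac45}+\calf_{\frac45,\frac{23}5})$; then $\U^2_{T(2,5)\cs T(5,6),\frac45}(\frac45)<-2(\frac{23}5-\frac{19}5)=-\frac85$.

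The last step is a finite check on the grading--one part of $\calb'$. A grading count rules out $U$--translates, so any chain $z$ with $\partial z=x_++x_-$ is an $\F$--combination of the products (grading--one $T(2,5)$ generator)$\,\otimes\,$(grading--zero $T(5,6)$ generator) and (grading--zero $T(2,5)$ generator)$\,\otimes\,$(grading--one $T(5,6)$ generator); among these, the only ones whose support--line level is $\le\frac{23}5$ are $(1,2)\otimes(1,6)$, $(1,2)\otimes(3,3)$, $(2,1)\otimes(1,6)$, $(2,1)\otimes(3,3)$, since every product with a grading--one $T(5,6)$ factor has level $\ge 5$ (the smallest such, the bridge generator $z=(0,2)\otimes(3,6)$ at $(3,8)$, realizes $\gamma^2=5$, so one finds $\U^2_{T(2,5)\cs T(5,6),\frac45}(\frac45)=-\frac{12}5$). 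Taking boundaries, $(0,2)\otimes(3,3)$ can only be produced by $(1,2)\otimes(3,3)$, which also yields $(1,1)\otimes(3,3)$; this can only be cancelled by $(2,1)\otimes(3,3)$, which then leaves an uncancellable $(2,0)\otimes(3,3)$. Hence no chain of level $\le\frac{23}5$ realizes $\partial z=x_++x_-$, giving $\gamma^2_{T(2,5)\cs T(5,6),\frac45}(\frac45)>\frac{23}5$ as desired. I expect the main obstacle to be organizational rather than conceptual: building the $45$--generator tensor product and its filtration carefully, checking that the $U$--action contributes nothing in the relevant grading, and confirming that the list of generators of level $\le\frac{23}5$ is complete. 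The point of contrast with $T(5,7)$ is precisely that there the bridging generator $(3,7)$ has level exactly $\frac{23}5$, whereas tensoring forces the bridge $(0,2)\otimes(3,6)$ up to level $5$.
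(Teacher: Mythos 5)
Your proposal is correct and takes essentially the same route as the paper's proof: build the tensor-product basis, identify the two pivot generators $(0,2)\otimes(1,6)$ and $(0,2)\otimes(3,3)$ at level $s=\frac{19}{5}$, observe that the only Maslov-grading-one generators of level at most $\frac{23}{5}$ are $(1,2)\otimes(1,6)$, $(1,2)\otimes(3,3)$, $(2,1)\otimes(1,6)$, $(2,1)\otimes(3,3)$, and check that no combination of these bounds the sum of the pivots. Your factorwise organization of the level computation (in place of the paper's exhaustive table) and your sharper observation that the bridge $(0,2)\otimes(3,6)$ at level $5$ actually gives $\U^2_{T(2,5)\cs T(5,6),\frac45}(\frac45)=-\frac{12}{5}$ are both correct refinements, not departures.
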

\begin{proof}
We follow the same procedure as in the proof of Proposition ~\ref{prop57}.  The chain complexes $\cfk ^{\infty}(T(2,5))$ and $\cfk ^{\infty}(T(5,6))$ are both computed from their Alexander polynomials and then the tensor product is taken to produce $\cfk ^{\infty}(T(2,5)\cs T(5,6))$ as shown in Figures ~\ref{fig25+56-sep} and ~\ref{fig25+56-tensor}.

\begin{figure}[ht]
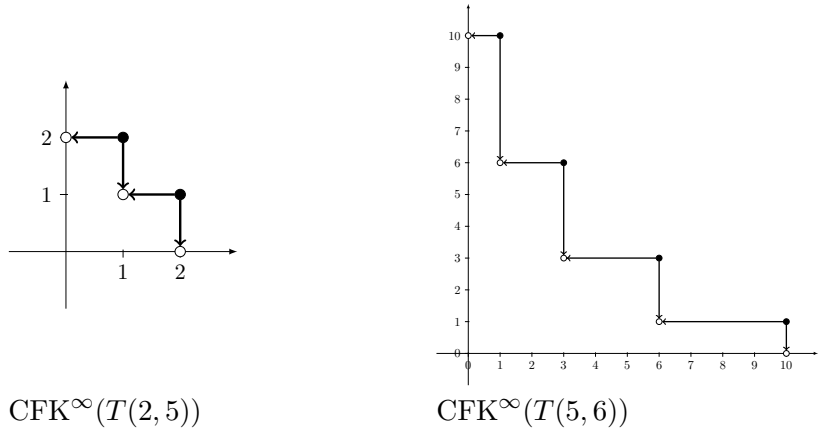

  \centering
\begin{tabular}{m{2.1in} m{2.1in}}
  \resizebox{1.2in}{1.2in}{\includestandalone{fig25}} &  \resizebox{2.0in}{2.0in}{\includestandalone{fig56}} \\ 
 $\cfk ^{\infty}(T(2,5))$ &$\cfk ^{\infty}(T(5,6))$ 
\end{tabular}
  \caption{The knot complexes for $T(2,5)$ and $T(5,6)$.}
  \label{fig25+56-sep}
\end{figure}

\begin{figure}[ht]
  \centering
  \resizebox{3in}{3in}{\includestandalone{fig25+56}} \\ 
  \caption{The knot complex for $T(2,5)\cs T(5,6)$.}
  \label{fig25+56-tensor}
\end{figure}

Considering lines of slope $-\frac{3}{2}$ (corresponding to $t=\frac{4}{5}$), an analysis of the bifiltered basis elements in $\cfk ^{\infty} (T(2,5)\cs T(5,6))$ reveals that at $j$--intercept $b=\frac{19}{2}$ corresponding to $s=\frac{19}{5}$, and for no smaller $b$ or $s$, the line $\call_{\frac{4}{5},s}$ contains basis elements.  In fact, the line contains exactly two bifiltered basis elements -- those represented by $(1,8)$ and $(3,5)$ in Figure ~\ref{fig25+56-tensor} and arising from the tensor product elements $(0,2)\otimes (1,6)$ and $(0,2)\otimes (3,3)$ respectively.  Denote by $A$ the element represented by $(1,8)$ and $B$ the element represented by $(3,5)$.

Note that Theorem ~\ref{feller-krcatovich} implies that the singularities of $\U'_{T(5,7)}(t)$ and $\U'_{T(2,5)\cs T(5, 6)}(t)$ occur at the same $t$--values.  So $\frac{4}{5}$ is a singularity of $\U'$ and we compute the secondary Upsilon invariant at $t_0=\frac{4}{5}$.  Now, suppose that $A$ and $B$ are homologous in $\calf_{\frac{4}{5},\frac{23}{5}}$.  Then, since both $A$ and $B$ are at Maslov  grading 0, there would be basis elements $x_1, x_2, ... \,, x_k$ in $\calf_{\frac{4}{5},\frac{23}{5}}$ at Maslov grading 1 such that 
\begin{equation}
\partial(b_1x_1+b_2x_2+\cdots +b_kx_k)=A+B
\label{boundary}
\end{equation}
for some $b_i\in\Z_2$.  We compute for all basis elements in $\cfk ^{\infty}(T(2,5)\cs T(5,6))$  which are at Maslov grading 1 (note that these must be tensor products of one element at grading 0 and one at grading 1), the value of $s$ for which the element is on the line $\call_{\frac{4}{5},s}$.  See Figure ~\ref{tensor table} for the full list of computations.
\begin{figure}[ht]
\begin{tabular}{| c || c | c || c |}
\hline
\rule{0pt}{1.02\normalbaselineskip} \rule[-1.2ex]{0pt}{0pt}$\cfk ^{\infty}(T(2,5))_0\otimes \cfk ^{\infty}(T(5,6))_1$& $i$&$j$ &$s = \frac{2}{5}j+\frac{3}{5}i$\\
\hline
\rule{0pt}{1.02\normalbaselineskip}$(0,2)\otimes(1,10)$ & 1&12 & $27/5$\\
$(0,2)\otimes(3,6)$& 3 & 8 & $25/5$\\
$(0,2)\otimes(6,3)$& 6&5 & $28/5$\\
$(0,2)\otimes(10,1)$& 10&3 & $36/5$\\
$(1,1)\otimes(1,10)$& 2&11& $28/5$\\
$(1,1)\otimes(3,6)$& 4&7& $26/5$\\
$(1,1)\otimes(6,3)$& 7&4 & $29/5$\\
$(1,1)\otimes(10,1)$& 11&2 & $37/5$\\
$(2,0)\otimes(1,10)$& 3&10 & $29/5$\\
$(2,0)\otimes(3,6)$& 5&6& $27/5$\\
$(2,0)\otimes(6,3)$& 8&3& $30/5$\\
$(2,0)\otimes(10,1)$& 12&1 & $38/5$\\
\hline
\hline
\rule{0pt}{1.02\normalbaselineskip} \rule[-1.2ex]{0pt}{0pt}$\cfk ^{\infty}(T(2,5))_1\otimes \cfk ^{\infty}(T(5,6))_0$& $i$ & $j$ &$s = \frac{2}{5}j+\frac{3}{5}i$\\
\hline
\rule{0pt}{1.02\normalbaselineskip}$(1,2)\otimes (0,10)$ & 1&12 & $27/5$ \\
$(1,2)\otimes (1,6)$ & 2&8 & $22/5$ \\
$(1,2)\otimes (3,3)$ & 4&5 & $22/5$ \\
$(1,2)\otimes (6,1)$ & 7&3 & $27/5$ \\
$(1,2)\otimes (10,0)$ & 11&2 & $37/5$ \\
$(2,1)\otimes (0,10)$ & 2&11 & $28/5$ \\
$(2,1)\otimes (1,6)$ & 3&7 & $23/5$ \\
$(2,1)\otimes (3,3)$ & 5&4 & $23/5$ \\
$(2,1)\otimes (6,1)$ & 8&2 & $28/5$ \\
$(2,1)\otimes (10,0)$ & 12&1 & $38/5$ \\
\hline
\end{tabular}
\caption{The value of $s$ for which each element at grading 1 is on the line $\call_{\frac{4}{5},s}$.}
\label{tensor table}
\end{figure}

  Our search results in exactly four elements within the desired range:
$$x_1=(1,2)\otimes (1,6), x_2=(1,2)\otimes (3,3), x_3=(2,1)\otimes (1,6), \text{ and } x_4=(2,1)\otimes (3,3).$$
Taking the boundaries, we get:
\begin{gather*}
\partial (x_1)=((0,2)+(1,1))\otimes (1,6) = (0,2)\otimes (1,6)+(1,1)\otimes (1,6)=A+(1,1)\otimes (1,6),\\ 
\partial (x_2)=((0,2)+(1,1))\otimes (3,3)=(0,2)\otimes (3,3)+(1,1)\otimes (3,3)=B+(1,1)\otimes (3,3),\\
\partial (x_3)=((2,0)+(1,1))\otimes (1,6)=(2,0)\otimes (1,6)+(1,1)\otimes (1,6),\\
 \partial (x_4)=((2,0)+(1,1))\otimes (3,3)=(2,0)\otimes (3,3)+(1,1)\otimes (3,3).
\end{gather*}
Notice that if Equation ~\ref{boundary} is to hold, it must be that $b_1=b_2=1$.  Since $$\partial(x_1+x_2)=A+B+(1,1)\otimes (1,6)+(1,1)\otimes (3,3),$$ we need $b_3=b_4=1$ in order to counteract the extra contributions of $x_1$ and $x_2$. However, $$\partial(x_1+x_2+x_3+x_4)=A+B+(2,0)\otimes (1,6)+(2,0)\otimes (3,3)$$ and we are left without options.  So $A$ and $B$ are not homologous in $\calf_{\frac{4}{5},\frac{23}{5}}$.  Thus $$\gamma^2_{T(2,5)\cs T(5,6), \frac{4}{5}}\left(\frac{4}{5}\right) > \frac{23}{5}$$ and so
$$\U^2_{T(2,5)\cs T(5,6),\frac{4}{5}}(\frac{4}{5})=-2\cdot\left(\gamma^2_{T(2,5)\cs T(5,6), \frac{4}{5}}\left(\frac{4}{5}\right)-\frac{19}{5}\right)<-2\cdot \left(\frac{23}{5}-\frac{19}{5}\right)=-\frac{8}{5}.$$
\end{proof}
\noindent Thus, since $\U^2$ is a stable equivalence invariant, it follows that $\cfk ^{\infty}(T(5,7))$ is not stably equivalent to $\cfk ^{\infty}(T(2,5)\cs  T(5,6))$.  This completes the proof of Theorem ~\ref{main thm}.

\subsection{Proof of the general theorem}
$ $ \newline The proof of Theorem \ref{general thm} follows similar steps to those of Theorem \ref{main thm}.  We will:
\begin{enumerate}
\item Construct the knot complex for $T(p,\,p+2)$ \vspace{0.05 in}
\item Compute $\U^2_{T(p,\,p+2),\frac{4}{p}}(\frac{4}{p})$
\item Construct the knot complex for $T(2,\,p)\cs T(p, \,p+1)$ \vspace{0.05 in}
\item Show that $\U^2_{T(2,\,p)\cs T(p, \,p+1),\frac{4}{p}}(\frac{4}{p}) <\U^2_{T(p,\,p+2),\frac{4}{p}}(\frac{4}{p})$
\end{enumerate}
In steps $(1)$ and $(3)$ we will use the relationship between the semigroup generated by $p,q$ and the Alexander polynomial  $\Delta_{T(p, \,q)}(t)$ given in \cite{borodzik-livingston}:
\begin{equation}
\frac{\Delta_{T(p,q)}(t)}{1-t} = \sum_{s\in S_{p,q}}t^s.
\label{semigroup alex}
\end{equation}
This relationship combined with the method given in Section \ref{basics} describes the staircase.

In step $(2)$ we show that $t_0=\frac{4}{p}$ is a singularity of $\U'_{T(p,\,p+2)}(t)$ and identify the two pivot points in the complex at this $t$--value.  Then we compute $\U^2_{T(p,\,p+2),\frac{4}{p}}(\frac{4}{p})$ from the staircase complex by showing that the two pivot points become homologous in $\calf_{\frac{4}{p}, \frac{p^2+p-7}{p}}$. 

Finally, in step $(4)$, we see that, as in step (2), $t_0=\frac{4}{p}$ is a singularity of $\U'_{T(2,\,p)\cs T(p,\,p+1)}(t)$ and there are precisely two bifiltered basis elements acting as pivot points for $\U_{T(2,\,p)\cs T(p, \,p+1)}(t)$ at this $t$--value.  In order to show that these two elements do not become homologous in $\calf_{\frac{4}{p}, \frac{p^2+p-7}{p}}$, we compute that, as in the proof of Theorem \ref{main thm}, there are precisely four bifiltered basis elements at Maslov grading 1 in this subcomplex and no combination of the four has boundary equal to the sum of the pivot points.

\begin{proof}
$ $\newline
\textbf{Step (1):} Let $S_{p,\,q}$ be the semigroup generated by $p$ and $q$, i.e., $S_{p,\,q}=\{np+mq\;|\;n,m\in \Z_{\geq 0}\}.$
We have that 
\begin{equation}\arraycolsep=1.4pt\def\arraystretch{1}
\begin{array}{rll}
S_{p,\,p+2} =\{&0,&\\
 &p, p+2&\\
&2p, 2p+2, 2p+4,&\\
 &3p, 3p+2, 3p+4, 3p+6,&\\
&\vdots&\\
&np, np+2, np+4, ... , np+2n,&\\
&\vdots&\\
&(p-1)p, (p-1)p+2, ... , (p-1)p+2(p-1)\;\}\cup \Z_{\geq (p-1)(p+1)}.&
\end{array}
\label{S_p,p+2}
\end{equation}
The following is a relationship between the Alexander polynomial of $T(p,q)$ and its semigroup, given in \cite{borodzik-livingston},
$$\frac{\Delta_{T(p,q)}(t)}{1-t} = \sum_{s\in S_{p,\,q}}t^s,$$
in other words, 
$$\Delta_{T(p,q)}(t)=\sum_{s\in S_{p,\,q}}t^s-t^{s+1}.$$
Since $T(p, \, p+2)$ is an $L$--space knot, $\cfk^{\infty}(T(p, \, p+2))$ is then a staircase of the form $$[a_1-a_0, a_2-a_1,...,a_d-a_{d-1}]$$ where $d=p^2-1$ and
$$\Delta_{T(p, \, p+2)}(t) = \sum_{i=0}^d (-1)^it^{a_i}.$$

Order the elements in the semigroup $S_{p,\,p+2}=\{s_0, s_1, s_2, ...\}$ such that $s_i<s_{i+1}$.  Note that $S_{p,\,p+2}$ as shown in (\ref{S_p,p+2}) is in increasing order through the element $s_{i^*}=\left( \frac{p+1}{2} \right) p+2\left(\frac{p-1}{2}\right)$, and for $s_i\leq s_{i^*}$, $s_i-s_{i-1}>1$.  So we have that $\cfk^{\infty}(T(p, \, p+2))$ is a staircase with initial portion:
$$[(s_0+1)-s_0, s_1-(s_0+1), (s_1+1)-s_1, s_2-(s_1+1),..., (s_{{i^*}-1}+1)-s_{{i^*}-1}, s_{i^*}-(s_{{i^*}-1}+1)]$$
\begin{equation}
=[1, s_1-(s_0+1), 1, s_2-(s_1+1),..., 1, s_{i^*}-(s_{{i^*}-1}+1)].
\label{staircase}
\end{equation}
On the one hand, adding the first steps through $s_i^*$, we have
$$\sum_{s_i\leq s_{i^*}} 1+s_i-(s_{i-1}+1) = s_{i^*},$$
and on the other hand, adding the first $d/2$ steps
$$d/2 = \sum_{i=1}^{d/2} a_i-a_{i-1}$$
by symmetry of the  $\cfk^{\infty}(T(p, \, p+2))$ staircase.  Since $s_i^*>d/2$, this implies that the full staircase is (\ref{staircase}), where the pattern is truncated and reflected after the $(d/2)$th term.  More precisely, $\cfk^{\infty}(T(p, \, p+2))$ is a staircase of the form
$$[1, p-1,1,1,1,p-3, 1,1,1,1,1,p-5, ... ,\underbrace{1,1, ... ,1}_{2j+1} , p-(2j+1), ... ]$$
where the pattern is truncated and reflected after the $(p^2-1)/2$th term.  This gives us a bifiltered basis $\mathcal{B}$ for $\cfk^{\infty}(T(p, \, p+2))$.

\textbf{Step (2):} Note that the points $$A=\left(1, \frac{(p-1)(p+1)}{2}-(p-1)\right) \text{ and } B=\left(3, \frac{(p-1)(p+1)}{2}-(p-1)-1-(p-3)\right)$$ both lie on the line $\mathcal{L}_{\frac{4}{p},\frac{p^2-p-1}{p}}$ given by
$$\, j=-\frac{p-2}{2}i+\frac{p^2-p-1}{2}.$$
A computation shows that all other points in the diagram of $\cfk^{\infty}(T(p, \, p+2))$ are above this line.  Thus $\gamma\,_{T(p, \, p+2)}(\frac{4}{p})=\frac{p^2-p-1}{p}$.  So near $t=\frac{4}{p}$, the line $\call_{t,s}$ pivots around the two points $A$ and $B$.  This causes a change in slope in $\U_{T(p, \, p+2)}$ and so $t=\frac{4}{p}$ is a singulariy of $\U_{T(p, \, p+2)}'$ and $\calc_{\frac{4}{p}}=\{A,B\}$.  

We now compute $\U^2_{T(p,\,p+2),\frac{4}{p}}(\frac{4}{p})$. For small enough $\delta$,
$$\calc_{\frac{4}{p}-\delta}=\{A\} \text{ and } \calc_{\frac{4}{p}+\delta}=\{B\}.$$
To determine  $\U^2_{T(p, \, p+2),\frac{4}{p}}(\frac{4}{p})$, we compute how far the line of slope $-\frac{(p-2)}{2}$ needs to be moved so that the elements represented by $A$ and $B$ are homologous in $\calf_{\frac{4}{p},r}$. 

  Based on the staircase, we see that we need $\calf_{\frac{4}{p},r}$ to contain the basis elements represented by $A+(1,0)$ and $A+(2,-1)$.  The minimal $r$ which accomplishes this is $r=\frac{p^2+p-7}{p}$.  So we have that $\gamma^2_{T(p, \, p+2),\frac{4}{p}}(\frac{4}{p}) = \frac{p^2+p-7}{p}$.  Thus 
$$\U^2_{T(p, \, p+2),\frac{4}{p}}(\frac{4}{p}) = -2\cdot \left(\frac{p^2+p-7}{p}-\frac{p^2-p-1}{p}\right) = -4\cdot \frac{p-3}{p}.$$


\textbf{Step (3):} The diagrams for the chain complexes $\cfk ^{\infty}(T(2,p))$ and $\cfk ^{\infty}(T(p, \, p+1))$ are computed from their semigroups.  We have that
$$S_{2,p}=\{0,2,4, ... , p-1\}\cup\Z_{\geq p},$$
so
$$\Delta_{T(2,p)} = 1-t+t^2-t^3+t^4-t^5+\cdots +t^{p-1}$$
thus the staircase for $\cfk ^{\infty}(T(2,p))$ is 
$$[\underbrace{1,1,1,1, ... , 1}_{p-1}].$$
Similarly, 
\begin{equation*}\arraycolsep=1.4pt\def\arraystretch{1}
\begin{array}{rll}
S_{p,\,p+1} =\{&0,p, p+1,2p, 2p+1, 2p+2, ... ,&\\
&(p-2)p, (p-2)p+1, ... , (p-2)p+(p-2)\;\}\cup \Z_{\geq (p-1)p}&
\end{array}
\end{equation*}
so
$$\Delta_{T(p, \, p+1)} = 1-t+t^p-t^{p+2}+t^{2p}-t^{2p+3}+\cdots  + t^{(p-2)p}-t^{(p-2)p+(p-1)}+t^{(p-1)p},$$
and thus the staircase for $\cfk ^{\infty}(T(p, \, p+1))$ is 
$$[1, p-1, 2, p-2, ... , j, p-j, ..., p-1, 1].$$

From these staircase descriptions, a bifiltered basis $\calb_{2,\,p} = \{\alpha_i\}_{i=0}^p$, $\calb_{p,\,p+1} = \{\beta_i\}_{i=0}^{2p-1}$ for each complex can be determined:
$$\alpha_{2i} \text{ is represented by } \left(i, \frac{p-1}{2}-i\right),$$
$$\alpha_{2i+1} \text{ is represented by } \left(i+1, \frac{p-1}{2}-i\right),$$
$$\beta_{2i} \text{ is represented by } \left(\sum_{n=1}^i n, \frac{(p-1)p}{2}-\sum_{n=1}^i (p-n)\right),$$
$$\beta_{2i+1} \text{ is represented by } \left(\sum_{n=1}^{i+1} n, \frac{(p-1)p}{2}-\sum_{n=1}^i (p-n)\right).$$
Here even-indexed elements are at Maslov grading 0, while odd-indexed elements are at Maslov grading 1.  A bifiltered basis for the tensor product is the tensor product of the bases $\calb_{2,\,p}\otimes\calb_{p,\,p+1}=\{\alpha_i\otimes\beta_j\}$.

\textbf{Step (4):} In the tensor product, the points $\alpha_0\otimes \beta_2$ and $\alpha_0\otimes \beta_4$ are at the same filtration levels as $A$ and $B$ respectively.  Thus they lie on a line of slope $-\frac{p-2}{2}$ (corresponding to $t=\frac{4}{p}$) with $j$--intercept $\frac{p^2-p-1}{2}$ (corresponding to $s=\frac{p^2-p-1}{p}$).  We need to confirm that all other bifiltered basis elements in the tensor prodcut lie above this line.

First, note that $\alpha_{2i+1}\otimes \beta_{2j}$, $\alpha_{2i+1}\otimes \beta_{2j+1}$, and $\alpha_{2i}\otimes \beta_{2j+1}$ are at higher filtration levels than $\alpha_{2i}\otimes \beta_{2j}$.  So we will show that for all $(i,j)\neq (0,1)\text{ or } (0,2)$, $\alpha_{2i}\otimes \beta_{2j}$ is above line $\call_{\frac{4}{p},\frac{p^2-p-1}{p}}$ given by  
$$y=-\frac{p-2}{2}\,x+\frac{p^2-p-1}{2}.$$
 The element $\alpha_{2i}\otimes \beta_{2j}$ is represented by
$$\left(i+\sum_{n=1}^j n, \frac{p-1}{2}-i+\frac{(p-1)p}{2}-\sum_{n=1}^j (p-n) \right)=\left(i+\frac{j(j+1)}{2}, \frac{p^2-1}{2}-i-jp+\frac{j(j+1)}{2}\right).$$
We test the inequality
$$y\leq-\frac{p-2}{2}x+\frac{p^2-p-1}{2},$$
at the $x$-- and $y$--values above and find that
$$ \frac{p^2-1}{2}-i-jp+\frac{j(j+1)}{2}\leq-\frac{p-2}{2}\left(i+\frac{j(j+1)}{2}\right)+\frac{p^2-p-1}{2},$$
$$-i-jp+\frac{j^2+j}{2}\leq -\frac{p-2}{2}i-\frac{p-2}{2}\cdot \frac{j^2+j}{2}-\frac{p}{2},$$
$$\frac{p-4}{2} i \leq -\frac{p}{2}\cdot\frac{j^2+j}{2}+jp-\frac{p}{2,}$$
$$\frac{p-4}{2}i\leq -\frac{p}{4}j^2+\frac{3p}{4}j-\frac{p}{2},$$
$$i\leq \frac{2}{p-4} \left(-\frac{p}{4}j^2+\frac{3p}{4}j-\frac{p}{2}\right),$$
$$i\leq -\frac{p}{2(p-4)} (j^2 -3j+2),$$
\begin{equation} i\leq - \frac{p}{2(p-4)} (j-2)(j-1). \label{ineq1} \end{equation}
Inequality \ref{ineq1} holds only for $i=0$ and $j=1$ or $2$.  So for all other values of $i$ and $j$, $\alpha_{2i}\otimes\beta_{2j}$ is above the line $\call_{\frac{4}{p},\frac{p^2-p-1}{p}}$.

Theorem ~\ref{feller-krcatovich} implies that the singularities of $\U'_{T(p, \, p+2)}(t)$ and $\U'_{T(2,p)\cs T(p, \, p+1)}(t)$ occur at the same $t$--values.  Thus $\frac{4}{p}$ is a singularity of $\U'_{T(2,p)\cs T(p, \, p+1)}$ and so we can consider the secondary Upsilon invariant of $T(2,p)\cs T(p, \, p+1)$ at $t_0=\frac{4}{p}$.  Now, suppose that $\alpha_0\otimes\beta_2$ and $\alpha_0\otimes\beta_4$ are homologous in $\calf_{\frac{4}{p},\frac{p^2+p-7}{p}}$.  Then, since both $\alpha_0\otimes\beta_2$ and $\alpha_0\otimes\beta_4$ are at Maslov  grading 0, there would be basis elements $x_1, x_2, ... \,, x_k$ in $\calf_{\frac{4}{p},\frac{p^2+p-7}{p}}$ at Maslov grading 1 such that 
\begin{equation}
\partial(b_1x_1+b_2x_2+\cdots +b_kx_k)= \alpha_0\otimes\beta_2+\alpha_0\otimes\beta_4
\label{boundary2}
\end{equation}
for some $b_i\in\Z_2$.   Bifiltered basis elements at Maslov grading 1 have the form

\begin{align*}
\alpha_{2i}\otimes\beta_{2j+1} &= \left(i+\sum_{n=1}^{j+1}n\, , \,\frac{p-1}{2}-i+\frac{(p-1)p}{2}-\sum_{n=1}^j(p-n)  \right)\\
&=\left(i+\frac{(j+1)(j+2)}{2} \, , \, \frac{p^2-1}{2} -i-jp+\frac{j(j+1)}{2}\right)
\end{align*}
or
\begin{align*}
\alpha_{2i+1}\otimes\beta_{2j} &= \left(i+1+\sum_{n=1}^{j}n\, , \,\frac{p-1}{2}-i+\frac{(p-1)p}{2}-\sum_{n=1}^j(p-n)  \right)\\
&=\left(i+1+\frac{j(j+1)}{2} \, , \, \frac{p^2-1}{2} -i-jp+\frac{j(j+1)}{2}\right).
\end{align*}
To determine which elements of Maslov grading 1 are in  $\calf_{\frac{4}{p},\frac{p^2+p-7}{p}}$, we determine which of the above satisfy the inequality 
$$y\leq -\frac{p-2}{2}\,x+\frac{p^2+p-7}{2}.$$
For $\alpha_{2i}\otimes\beta_{2j+1}$ we have
\begin{align*}
\frac{p^2-1}{2} -i-jp+\frac{j(j+1)}{2} &\leq -\frac{p-2}{2}\left(i+\frac{(j+1)(j+2)}{2}\right) + \frac{p^2+p-7}{2} \\
\frac{p-4}{2}i &\leq -\frac{p-2}{2}\cdot \frac{j^2+3j+2}{2} - \frac{j^2+j}{2} +jp +\frac{p-6}{2}\\
\frac{p-4}{2}i&\leq -\frac{p}{4}j^2+\frac{p+4}{4}j-2 \\
i&\leq \frac{2}{p-4} \left(-\frac{p}{4}j^2+\frac{p+4}{4}j-2 \right) \\
i&\leq -\frac{p}{2(p-4)} \left(j^2-\left(1+\frac{4}{p}\right)j+\frac{8}{p} \right).
\end{align*}
Since the right-hand side of the final inequality is negative for $j\geq 0$, the element $\alpha_{2i}\otimes\beta_{2j+1}$ is not in  $\calf_{\frac{4}{p},\frac{p^2+p-7}{p}}$ for any $i, j$.
For $\alpha_{2i+1}\otimes\beta_{2j}$ we have
\begin{align*}
\frac{p^2-1}{2} -i-jp+\frac{j(j+1)}{2} &\leq -\frac{p-2}{2}\left(i+1+\frac{j(j+1)}{2}\right) + \frac{p^2+p-7}{2} \\
\frac{p-4}{2}i &\leq -\frac{p-2}{2}\cdot \frac{j^2+j+2}{2} - \frac{j^2+j}{2}+jp +\frac{p-6}{2}\\
\frac{p-4}{2}i&\leq -\frac{p}{4}j^2+\frac{3p}{4}j-2 \\
i&\leq \frac{2}{p-4} \left(-\frac{p}{4}j^2+\frac{3p}{4}j-2 \right) \\
i&\leq -\frac{p}{2(p-4)} \left(j^2-3j+\frac{8}{p} \right) \\
i&\leq -\frac{p}{2(p-4)}\left(j-\frac{3+\sqrt{9-\frac{32}{p}}}{2} \right) \left(j-\frac{3-\sqrt{9-\frac{32}{p}}}{2} \right).
\end{align*}
This inequality only holds when
$$\frac{3-\sqrt{9-\frac{32}{p}}}{2}\leq j\leq \frac{3+\sqrt{9-\frac{32}{p}}}{2} \text{ and } 0\leq i\leq -\frac{p}{2(p-4)}\left(\left(\frac{3}{2}\right)^2-3\cdot \frac{3}{2}+\frac{8}{p}\right),$$
which is when
$$1\leq j \leq 2 \text{ and } 0\leq i\leq 1.$$Thus there are four elements within the desired range:
$$\alpha_1\otimes\beta_2,\, \alpha_1\otimes\beta_4,\, \alpha_3\otimes\beta_2, \text{ and } \alpha_3\otimes\beta_4.$$
Taking the boundaries, we get:
\begin{align*}
\partial (\alpha_1\otimes\beta_2) &= \partial \alpha_1 \otimes \beta_2 +\alpha_1 \otimes \partial \beta_2\\
&= (\alpha_0 +\alpha_2) \otimes \beta_2 + \alpha_1 \otimes 0 \\
& =  \alpha_0 \otimes \beta_2 + \alpha_2 \otimes \beta_2, \\
\partial (\alpha_1\otimes\beta_4) &= \partial \alpha_1 \otimes \beta_4 +\alpha_1 \otimes \partial \beta_4\\
&= (\alpha_0 +\alpha_2) \otimes \beta_4 + \alpha_1 \otimes 0 \\
& =  \alpha_0 \otimes \beta_4 + \alpha_2 \otimes \beta_4, \\
\partial (\alpha_3\otimes\beta_2) &= \partial \alpha_3 \otimes \beta_2 +\alpha_3 \otimes \partial \beta_2\\
&= (\alpha_2 +\alpha_4) \otimes \beta_2 + \alpha_3 \otimes 0 \\
& =  \alpha_2 \otimes \beta_2 + \alpha_4 \otimes \beta_2, \\
\partial (\alpha_3\otimes\beta_4) &= \partial \alpha_3 \otimes \beta_4 +\alpha_3 \otimes \partial \beta_4\\
&= (\alpha_2 +\alpha_4) \otimes \beta_4 + \alpha_3 \otimes 0 \\
& =  \alpha_2 \otimes \beta_4 + \alpha_4 \otimes \beta_4. \\
\end{align*}
Notice that if Equation ~\ref{boundary2} is to hold, it must be that $\alpha_1\otimes\beta_2$ and  $\alpha_1\otimes\beta_4$ have coefficients of 1.  Since 
$$\partial(\alpha_1\otimes\beta_2 + \alpha_1\otimes\beta_4)=\alpha_0\otimes\beta_2+\alpha_0\otimes\beta_4+\alpha_2\otimes\beta_2 + \alpha_2 \otimes\beta_4,$$
it must be that  $\alpha_3\otimes\beta_2$ and  $\alpha_3\otimes\beta_4$ also have coefficients of 1. However, $$\partial(\alpha_1\otimes\beta_2+\alpha_1\otimes\beta_4+\alpha_3\otimes\beta_2 + \alpha_3 \otimes\beta_4)=\alpha_0\otimes\beta_2+\alpha_0\otimes\beta_4+\alpha_4\otimes\beta_2 + \alpha_4 \otimes\beta_4$$ and we are left without options.  So $\alpha_0\otimes\beta_2$ and $\alpha_0\otimes\beta_4$ are not homologous in $\calf_{\frac{4}{5},\frac{23}{5}}$.  Thus $$\gamma^2_{T(2,p)\cs T(p, \, p+1), \frac{4}{p}}\left(\frac{4}{p}\right) > \frac{p^2+p-7}{p}$$ and so
\begin{align*} 
\U^2_{T(2, p)\cs T(p, \, p+1),\frac{4}{p}}\left(\frac{4}{p}\right)&=-2\cdot\left(\gamma^2_{T(2, p)\cs T(p, \, p+1), \frac{4}{p}}\left(\frac{4}{p}\right)-\frac{p^2-p-1}{p}\right)\\
&<-2\cdot \left(\frac{p^2+p-7}{p}-\frac{p^2-p-1}{p}\right)\\
&=-4\cdot \frac{p-3}{p}
\end{align*}
as desired.

Since $\U^2$ is a stable equivalence invariant, it follows that $\cfk ^{\infty}(T(p, \, p+2))$ is not stably equivalent to $\cfk ^{\infty}(T(2, p)\cs  T(p, \, p+1))$.
\end{proof}

It may be that steps similar to those of the proof of  Theorem \ref{general thm} can be used to generalize it. The following is a conjecture of the author.
\begin{conjecture}
For all $p\geq 5$ and $2\leq k \leq p-2$ such that gcd($p, k)=1$, the knot complex $\cfk ^{\infty}(T(p, \, p+k))$ is not stably equivalent to $\cfk ^{\infty}(T(k, p)\cs T(p, \, p+1))$.
\end{conjecture}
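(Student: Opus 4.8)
The plan is to run, for each admissible pair $(p,k)$, the four-step scheme behind Theorem~\ref{general thm}, with $T(p,p+2)$ replaced by $T(p,p+k)$ and $T(2,p)$ by $T(k,p)$, working throughout at $t_0=\frac{2k}{p}$; write $f_t(x)=\frac t2\alex(x)+(1-\frac t2)\alg(x)$ for the support functional. In \textbf{Step (1)} one realizes $\cfk^\infty(T(p,p+k))$ --- an $L$--space knot complex, hence a staircase --- from $S_{p,p+k}=\{\ell p+mk : \ell\ge 0,\ 0\le m\le \ell\}$ via the relation $(\ref{semigroup alex})$; its initial portion is $[1,\,p-1,\,1,\,k-1,\,1,\,p-k-1,\,1,\,k-1,\,\dots]$.

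The connected-sum side (\textbf{Steps (3)--(4)}) is the easier direction and I would dispatch it first. Since the $T(p,p+1)$ staircase is $[1,p-1,2,p-2,\dots,p-1,1]$, the value $t_0$ is a singularity of $\U'_{T(p,p+1)}$ whose two pivot vertices are \emph{adjacent}, say $\beta_{2(k-1)}$ and $\beta_{2k}$, with the grading-$1$ vertex $\beta_{2k-1}$ sitting a horizontal distance $k$ from $\beta_{2(k-1)}$; moreover $t_0$ is \emph{not} a singularity of $\U'_{T(k,p)}$, so $T(k,p)$ has a unique vertex $\alpha_*$ realizing $\gamma_{T(k,p)}(t_0)$. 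Then in $\cfk^\infty(T(k,p))\otimes\cfk^\infty(T(p,p+1))$ the pivots are $\alpha_*\otimes\beta_{2(k-1)}$ and $\alpha_*\otimes\beta_{2k}$, and the cheapest chain joining them is the single cross element $\alpha_*\otimes\beta_{2k-1}$: any route through a different $\alpha_i$ only raises the support value, since $\alpha_*$ already minimizes it. Because $f_{t_0}(\beta_{2k-1})-f_{t_0}(\beta_{2(k-1)})=(1-\frac{t_0}{2})k=\frac{k(p-k)}{p}$, this gives the closed form $\U^2_{T(k,p)\cs T(p,p+1),t_0}(t_0)=-\frac{2k(p-k)}{p}$ (the $T(k,p)$ summand contributes nothing; equivalently this equals $\U^2_{T(p,p+1),t_0}(t_0)$).

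The whole problem therefore reduces to \textbf{Step (2)}: showing $\U^2_{T(p,p+k),t_0}(t_0)\neq-\frac{2k(p-k)}{p}$. Concretely, one locates the two pivot vertices $A,B\in\calc_{t_0}$ of the $T(p,p+k)$ staircase on $\call_{t_0,\gamma_{T(p,p+k)}(t_0)}$, checks that every other vertex of the staircase lies strictly above that line, and evaluates the connecting chain of grading-$1$ staircase vertices running from $A$ to $B$ (whose boundaries telescope to $A+B$); then $\U^2_{T(p,p+k),t_0}(t_0)=-2\bigl(M-f_{t_0}(A)\bigr)$ where $M$ is the largest $f_{t_0}$--value of a vertex in that chain, and one must show $M-f_{t_0}(A)\neq\frac{k(p-k)}{p}$. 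The expectation --- borne out for $k=2$, where $M-f_{t_0}(A)=\frac{2(p-3)}{p}$ against $\frac{2(p-2)}{p}$, and for $(p,k)=(7,3)$, where it is $\frac{8}{7}$ against $\frac{12}{7}$ --- is that this jump is \emph{strictly smaller} than $\frac{k(p-k)}{p}$: the $A$-to-$B$ path in the $T(p,p+k)$ staircase is built from several short steps rather than the single length-$k$ horizontal step of $T(p,p+1)$, so it hugs $\call_{t_0,\gamma_{T(p,p+k)}(t_0)}$ more closely and its grading-$1$ vertices stay lower. Once the two secondary invariants are seen to differ, Theorem~\ref{ups2 conc} finishes the argument.

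Carrying Step (2) out uniformly is the main obstacle, and it is essentially the obstacle already present in Step (1): the ordering of the rows $\{\ell p,\,\ell p+k,\dots,\ell p+\ell k\}$ of $S_{p,p+k}$ breaks down once $\ell k>p$, so the staircase --- and with it the positions of $A$, $B$ and of the intervening grading-$1$ vertices --- is governed by the continued-fraction expansion of $p/k$, and there is no uniform closed form (for $k=2$ the pivots sit at horizontal coordinates $1$ and $3$, while for $(p,k)=(7,3)$ they sit at $5$ and $8$). One needs a description of the stretch of staircase between the pivots --- obtained from the semigroup directly, or by an induction that peels off a $T(p,p+1)$ summand and reduces $T(k,p)$ via the Euclidean algorithm --- robust enough to bound $M$ and to separate $M-f_{t_0}(A)$ from $\frac{k(p-k)}{p}$ for every coprime $2\le k\le p-2$. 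It is also conceivable that $t_0=\frac{2k}{p}$ fails for a few sporadic pairs --- for instance if it turns out to be a singularity of $\U'_{T(k,p)}$, or if the two jumps coincide --- in which case one would have to use a different singularity, or the full two-parameter family $\U^2_{K,t}(s)$ of Kim--Livingston; resolving all this is presumably why the statement is left as a conjecture.
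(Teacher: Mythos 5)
This statement is left as a conjecture in the paper: no proof is given, only the remark that the four-step scheme of Theorem~\ref{general thm} might generalize. Your proposal follows exactly that suggested scheme (same decomposition into Steps (1)--(4), same use of the semigroup relation and of Theorem~\ref{ups2 conc}), but it is a strategy outline rather than a proof, and you say so yourself: Step (2) --- describing the stretch of the $T(p,p+k)$ staircase between the two pivots uniformly in $k$ and bounding $M-f_{t_0}(A)$ away from $\frac{k(p-k)}{p}$ --- is left entirely open, and that is precisely the content of the conjecture. So the honest verdict is that there is no proof here to compare against the paper's (nonexistent) one.

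Beyond that, the one part you claim to ``dispatch'' --- the exact value $\U^2_{T(k,p)\cs T(p,p+1),t_0}(t_0)=-\frac{2k(p-k)}{p}$ --- rests on a false justification. You assert that the cheapest chain bounding $A+B$ is the single element $\alpha_*\otimes\beta_{2k-1}$ because ``any route through a different $\alpha_i$ only raises the support value.'' That is not true: already for $p=5$, $k=2$ the elements $\alpha_1\otimes\beta_2$, $\alpha_1\otimes\beta_4$, $\alpha_3\otimes\beta_2$, $\alpha_3\otimes\beta_4$ sit at support values $\frac{22}{5}$ and $\frac{23}{5}$, strictly \emph{below} $f_{t_0}(\alpha_0\otimes\beta_3)=\frac{25}{5}$ (see Figure~\ref{tensor table}); the support value of $\alpha_i\otimes\beta_j$ is additive, and the increase from $\alpha_*$ to $\alpha_i$ can be more than offset by dropping from $\beta_{2k-1}$ to a grading-$1$-free $\beta_{2j}$ partner. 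What is actually true --- and what the paper's proofs of Theorems~\ref{main thm} and~\ref{general thm} spend most of their effort establishing --- is that no $\F_2$-combination of those cheaper elements has boundary $A+B$, because the boundaries telescope and always leave an unwanted term such as $\alpha_4\otimes\beta_2+\alpha_4\otimes\beta_4$. So even the ``easier direction'' of your plan requires the enumeration-and-telescoping argument, not the support-value heuristic; without it you get only the inequality $\gamma^2>\frac{p^2+p-7}{p}$ of the paper (for $k=2$), not the closed form you quote. Combined with the open Step (2), the proposal does not establish the conjecture.
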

\noindent Note that the Feller-Krcatovich relationships among the Upsilon functions of torus knots do extend to stable equivalence in some cases.  For example, with a change of basis one can see that the knot complexes $\cfk ^{\infty}(T(2,3) \cs T(2,3))$ and $\cfk ^{\infty}(T(2,5))$ are stably equivalent.  In a recent paper, Kim, Krcatovich, and Park \cite{kim-krcatovich-park} gave a condition for the knot complex of the connected sum of two $L$--space knots to be stably equivalent to a staircase complex.  Using this result, we can see that $\cfk ^{\infty}(T(p-1, \, p) \cs T(p, \, p+1))$ and $\cfk ^{\infty}(T(p, \,2p-1))$ are stably equivalent.  As a result, we limit our conjecture to $k\leq p-2$ and $p\geq 5$.


\raggedright
\bibliography{mybiblio}
\bibliographystyle{abbrv}

\end{document}